\documentstyle[amsfonts,amssymb,amsthm,amsmath, color, 12pt]{article}

\title{On some topological games involving networks}
\author{Leandro F. Aurichi, Maddalena Bonanzinga\\ and Davide Giacopello}
%
\date{}

\begin{document}

\maketitle

\newtheorem{theorem}{Theorem}[section]
\newtheorem{corollary}[theorem]{Corollary}
\newtheorem{question}[theorem]{Question}
\newtheorem{example}[theorem]{Example}
\newtheorem{lemma}[theorem]{Lemma}
\newtheorem{proposition}[theorem]{Proposition}
\newtheorem{property}[theorem]{Property}
\newtheorem{definition}[theorem]{Definition}
\newtheorem{remark}[theorem]{Remark}
\newtheorem{problem}[theorem]{Problem}

\newcommand{\my}[1]{\textcolor{red}{\sf #1}}
\newcommand{\green}[1]{\textcolor{green}{\sf #1}}
\newcommand{\blu}[1]{\textcolor{blue}{\sf #1}}
\newcommand{\violet}[1]{\textcolor{violet}{\sf #1}}

\begin{abstract}
	In these notes we introduce and investigate two new games called R-nw-selective game and the M-nw-selective game. These games naturally arise from the corresponding selection principles involving networks introduced in \cite{BG}. 
\end{abstract}

{\bf Keywords:} Countable network weight, Menger space,
Rothberger space, Menger game, Rothberger game, M-separable space, R-separable space, $G_{fin}({\cal D},{\cal D})$, $G_1({\cal D},{\cal D})$, M-nw-selective space,  R-nw-selective space.

{\bf AMS Subject Classification:}  54D65, 54A25, 54A20.

\section{Introduction}
Throughout the paper we mean by \lq\lq space\rq\rq a topological Hausdorff space.\\
A family $\mathcal N$ of sets is called a network for $X$ if for every $x\in X$ and for every open neighbourhood $U$ of $x$ there exists an element $N$ of $\mathcal N$ such that $x\in N\subseteq U$; $nw(X)=min\{|{\mathcal N}|: {\mathcal N}$ is a network for $X\}$ is the network weight of $X$. A family $\mathcal P$ of open sets is called a $\pi$-base for $X$ if every nonempty open set in $X$ contains a nonempty element of $\mathcal P$; $\pi w(X)=min\{|{\mathcal P}|: {\mathcal P}$ is a $\pi$-base for $X\}$ is the $\pi$-weight of $X$. It is known that $nw(X)\leq w(X)$, where $w(X)$ denotes the weight of the space $X$, and in the class of compact Hausdorff spaces $nw(X)= w(X)$ (see \cite{Ba}). $\delta(X)=\sup\{d(Y): Y \hbox{ is a dense subset of }X\}$, where $d(X)$ denotes the density of the space $X$.\\
Recall that for $f,g\in \omega^\omega$, $f\leq^*g$ means that $f(n)\leq g(n)$ for all but finitely many $n$ (and $f\leq g$ means that $f(n)\leq g(n)$ for all $n\in\omega$). A subset $D\subseteq \omega^\omega$ is dominating if for each $g\in \omega^\omega$ there is $f\in D$ such that $g\leq^*f$. The minimal cardinality of a dominating subset of $\omega^\omega$ is denoted by ${\frak d}$. The value of ${\frak d}$ does not change if one considers the relation $\leq$ instead of $\leq^*$ \cite[Theorem 3.6]{vD}. $\cal M$ denotes the family of all meager subsets of $\Bbb R$. $cov(\cal M)$ is the minimum of the cardinalities of subfamilies ${\cal U}\subseteq {\cal M}$ such that $\bigcup{\cal U}=\Bbb R$. However, another description of the cardinal $cov(\cal M)$ is the following. $cov(\cal M)$ is the minimum cardinality of a family $F\subset \omega^\omega$ such that for every $g\in \omega^\omega$ there is $f\in F$ such that $f(n)\not =g(n)$ for all but finitely many $n\in \omega$ (see \cite{B} and also \cite[Theorem 2.4.1]{BJ}).
Thus if $F\subset \omega^\omega$ and $|F|<cov(\cal M)$, then there is $g\in \omega^\omega$ such that for every $f\in F$, $f(n) = g(n)$ for infinitely many $n\in\omega$; it is often said that $g$ \textit{guesses} $F$. Also if $\Bbb P$ is a countable poset and $\cal D$ is a family of dense sets of cardinality strictly less than $cov({\cal M})$ then there exists a generic that meets all the dense sets of the family \cite[Section 3]{BJ}.

\bigskip

In \cite{Sch1, Sch2} a systematic approach was considered to describe selection principles. Given two collections $\cal A$ and $\cal B$ of some particular topological objects on a space $X$, Scheepers introduced the following notation:
\begin{itemize}
	\item[$S_1({\cal A},{\cal B})$]: For every sequence $({\cal U}_n: n\in\omega)$ of elements of $\cal A$ there exists $U_n\in {\cal U}_n$, $n\in\omega$, such that $\{U_n:n\in\omega\}$ belongs to $\cal B$.
	\item[$S_{fin}({\cal A},{\cal B})$]: For every sequence $({\cal U}_n: n\in\omega)$ of elements of $\cal A$ there exists a finite subset ${\cal F}_n\in {\cal U}_n$, $n\in\omega$, such that $\bigcup_{n\in\omega} {\cal F}_n$ belongs to $\cal B$.
\end{itemize}

If one denotes by ${\cal O}$ the family of all open covers of a space $X$ and by ${\cal D}$ the family of all dense subsets of a space $X$, a space is said to be Rothberger if it satisfies $S_1({\cal O},{\cal O})$, Menger if it satisfies  $S_{fin}({\cal O},{\cal O})$, R-separable if it has the property $S_1({\cal D},{\cal D})$ and M-separable if has the property $S_{fin}({\cal D},{\cal D})$.\\
$\delta(X) = \omega$ for every M-separable space $X$ \cite{BBMT} and if $\delta(X) = \omega$  and 
$\pi w(X) < {\frak d}$, then $X$ is M-separable (a stronger version of this fact is estabilished in \cite[Theorem 40]{Sch2}); moreover,
if $\delta(X) = \omega$  and 
$\pi w(X) <$ cov$({\cal M})$, then $X$ is R-separable (a stronger version of this fact is estabilished in \cite[Theorem 29]{Sch2}).\\ 
Every space having a countable base is R-separable, therefore M-separable. However,  not every space with countable network weight is M-separable. Hence in \cite{BG} the authors asked under which conditions a space with countable network weight must be M-separable and introduced and studied the following classes of spaces. \begin{definition}\rm
	Let $X$ be a space with $nw(X)=\omega$.
	\begin{itemize}
		\item $X$ is M-nw-selective if for every sequence $({\cal N}_n : n\in\omega)$ of countable networks for $X$ one can select finite ${\cal F}_n\subset {\cal N}_n$, $n\in\omega$, such that $\bigcup_{n\in\omega} {\cal F}_n$ is a network for $X$.
		\item $X$ is R-nw-selective if for every sequence $({\cal N}_n : n\in\omega)$ of countable networks for $X$ one can pick  $F_n\in {\cal N}_n$, $n\in\omega$, such that $\{F_n :n\in\omega\}$ is a network for $X$.
	\end{itemize}	
\end{definition}
In \cite{BG} it was proved that any R-nw-selective (M-nw-selective) space is both Rothberger and R-separable (Menger and M-separable). See also \cite{BGZ} for more details about these two properties.\\
Recall that topological games, introduced with a systematical structure in \cite{Sch1, Sch2}, are infinite games played by two different players, {\sc Alice} and {\sc Bob}, on a topological space $X$ (see also \cite{Au}). We assume that the lenght (number of innings) of the games is $\omega$ and the two players pick in each inning some topological objects of a fixed space. The strategies of the two players are a priori defined; they are some functions that take care of the game history. At the end there is only one winner, so a draw is not allowed. Playing a game $G$ on a space $X$ gives rise to two properties: \lq\lq {\sc Alice} has a winning strategy in the game $G$ on $X$\rq\rq; \lq\lq {\sc Bob} has a winning strategy in the game $G$ on $X$\rq\rq. Of course, since there is not draw, it is impossible for a space to have both these properties, but it can happen that the negation of both of them holds. In this case we say that the game $G$ is undeterminate on the space $X$.\\
Given two families of topological objects $\cal A$ and $\cal B$, the followings are two games associated to  selection principles.
\begin{itemize}
	\item[$G_1({\cal A},{\cal B})$]: is played according to the following rules.
	\begin{itemize}
		\item for every $n\in\omega$ {\sc Alice} chooses $A_n\in{\cal A}$;
		\item {\sc Bob} answers picking $b_n\in A_n$ for each $n\in\omega$;
		\item the winner is {\sc Bob} if $\{b_n: n\in\omega\}\in {\cal B}$, otherwise {\sc Alice} wins.
	\end{itemize}
	\item[$G_{fin}({\cal A},{\cal B})$]: is played according to the following rules.
	\begin{itemize}
		\item for every $n\in\omega$ {\sc Alice} chooses $A_n\in{\cal A}$;
		\item {\sc Bob} answers picking a finite subset $B_n\subseteq A_n$ for each $n\in\omega$;
		\item the winner is {\sc Bob} if $\bigcup\{B_n: n\in\omega\}\in {\cal B}$, otherwise {\sc Alice} wins.
	\end{itemize}
\end{itemize}

The game $G_1({\cal O},{\cal O})$, called Rothberger game, is strictly related to the Rothberger property. In the following we denote this game by Rothberger$(X)$. 
The game $G_{fin}({\cal O},{\cal O})$, called Menger game, is strictly related to the Menger property. In the following we denote this game by Menger$(X)$.
The game $G_1({\cal D},{\cal D})$ is strictly related to the R-separarability, in the following we denote this game by R-separable$(X)$. 
Similarly, the game $G_{fin}({\cal D},{\cal D})$ is strictly related to the M-separability and in the following we denote this game M-separable$(X)$.\\
This games were largely studied and some important characterizations of \lq\lq {\sc Alice} does not have a winning strategy\rq\rq and \lq\lq {\sc Bob} has a winning strategy\rq\rq have been given \cite{Au, Sch2, Tel, Tel2}. Despite this some questions are still open. We denote by ${\sc Bob}\uparrow G$ on $X$, the fact that \lq\lq {\sc Bob} has a winning strategy in the game $G$ on $X$\rq\rq and by ${\sc Alice}\not\uparrow G$ on $X$, the fact that \lq\lq {\sc Alice} does not have a winning strategy in the game $G$ on $X$\rq\rq.

\begin{remark}\rm\label{remarkina}
	In general the following implications hold.
	\begin{enumerate}
		\item ${\sc Bob}\uparrow G_1({\cal A},{\cal B}) \implies {\sc Bob}\uparrow G_{fin}({\cal A},{\cal B})$;
		\item ${\sc Alice}\not\uparrow G_1({\cal A},{\cal B}) \implies {\sc Alice}\not\uparrow G_{fin}({\cal A},{\cal B})$;
		\item\label{3} ${\sc Bob}\uparrow G_1({\cal A},{\cal B}) \implies {\sc Alice}\not\uparrow G_1({\cal A},{\cal B})\implies S_1({\cal A},{\cal B})$;
		\item\label{4} ${\sc Bob}\uparrow G_{fin}({\cal A},{\cal B}) \implies {\sc Alice}\not\uparrow G_{fin}({\cal A},{\cal B})\implies S_{fin}({\cal A},{\cal B})$.
	\end{enumerate}
	For some properties the last two implications of points \ref{3} and \ref{4} are, in fact, characterizations, that is ${\sc Alice}\not\uparrow G({\cal A},{\cal B})\iff S({\cal A},{\cal B})$.
\end{remark}
In \cite{Paw} it is proved that a space $X$ is Rothberger if, and only if, ${\sc Alice} \not\uparrow$ Rothberger$(X)$. In \cite{Tel, Gal} it is proved that if $X$ is a space in which each point is a $G_\delta$, {\sc Bob} $\uparrow$ Rothberger$(X)$ if, and only if, $X$ is countable. Similar arguments are valid for the Menger case: in \cite{Sch, Sze} it is proved that a space $X$ is Menger if, and only if, {\sc Alice}$\not\uparrow$ Menger$(X)$ and in \cite{Tel2} that if $X$ is a metrizable space, {\sc Bob} $\uparrow$ Menger$(X)$ if, and only if, $X$ is $\sigma$-compact. In \cite{Sch2} it is proved that {\sc Bob}$\uparrow$ R-separable$(X)$ if, and only if, $\pi w(X)=\omega$ and, under CH, it is given an example of a R-separable space $X$ such that {\sc Alice} $\uparrow$ R-separable$(X)$. 

\bigskip
Two topological games $G$ and $G'$ are called dual if both \lq\lq ${\sc Alice}\uparrow G \iff {\sc Bob}\uparrow G'$\rq\rq and \lq\lq${\sc Alice}\uparrow G'\iff {\sc Bob}\uparrow G$\rq\rq hold. Sometimes this dual vision could be useful to apply different techniques in demonstrations. For instance, the Point-open game is the dual of the Rothberger game (see \cite{Gal}), the Point-picking game is the dual of $G_1({\cal D},{\cal D})$ (see \cite{Sch2}), the Compact-open game is a possible dual of the Menger game (see \cite{Tel2}), but the question about the hypotesis to add to let them be dual is still open. 

\bigskip
In Section \ref{sec1} we study the R-nw-selective game. We present a characterization of the \lq\lq{\sc Bob} having a winning strategy\rq\rq property and a sufficient condition for \lq\lq{\sc Alice} not having a winning\rq\rq strategy property. We also give a consistent characterization, in terms of games, of the R-nw-selective property in the class of spaces without isolated points, with countable netweight and weight strictly less than $cov({\cal M})$. Moreover, we introduce the (Point, Open)-Set game and we prove that it is a promising candidate to be the dual of the R-nw-selective game.

In Section \ref{sec2} we study the M-nw-selective game. We present, under some consistent hypotesis, a sufficient condition for \lq\lq{\sc Alice} not having a winning strategy\rq\rq property and some necessary conditions for the \lq\lq{\sc Bob} having a winning strategy\rq\rq property. We also give a consistent characterization, in terms of games, of the M-nw-selective property in the class of spaces with countable netweight and weight strictly less than $\frak d$. 

\section{The R-nw-selective game}\label{sec1}
\begin{definition}\rm
	Let $X$ be a space with $nw(X)=\omega$. The R-nw-selective game, denoted by R-nw-selective$(X)$, is played according to the following rules. {\sc Alice} chooses a countable network ${\cal  N}_0$ and {\sc Bob} answers picking an element $N_0\in{\cal N}_0$. Then {\sc Alice} chooses another countable network ${\cal N}_1$ and {\sc Bob} answers in the same way and so on for countably many innings. At the end {\sc Bob} wins if the set $\{N_n:n\in\omega\}$ of his selections is a network. 
\end{definition}

Simultaneously we consider the possible dual version of the R-nw-selective game. 

\begin{definition}\rm
	The (Point, Open)-Set game on a space $X$, denoted by PO-set$(X)$, is played according to the following rules. {\sc Alice} chooses a point $x_0$ and an open set $U_0$ containing $x_0$. Then {\sc Bob} picks $N_0$ a subset of $X$ such that $x_0\in N_0\subseteq U_0$. The game go ahead in this way for every $n\in\omega$ and {\sc Alice} wins if the set $\{N_n:n\in\omega\}$ of {\sc Bob}'s choices is a network.
\end{definition}

\begin{proposition}\rm\label{prop2}
	Let $X$ be a space. {\sc Bob} $\uparrow$ R-nw-selective$(X)$ if, and only if, the space $X$ is countable and second countable.
\end{proposition}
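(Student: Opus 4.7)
The plan is to prove the two implications separately.

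\textbf{Direction $(\Leftarrow)$.} Suppose $X$ is countable and second countable; fix a countable base ${\cal B}=\{B_k:k\in\omega\}$. The countable set $\{(x,B):x\in X,\ B\in{\cal B},\ x\in B\}$ admits an enumeration $\{(y_n,V_n):n\in\omega\}$. Bob's strategy: at round $n$, given Alice's countable network ${\cal N}_n$, choose any $N_n\in{\cal N}_n$ with $y_n\in N_n\subseteq V_n$ (which exists since ${\cal N}_n$ is a network and $V_n$ is an open neighbourhood of $y_n$). For arbitrary $x\in X$ and open $U\ni x$, pick $B\in{\cal B}$ with $x\in B\subseteq U$ and $n$ with $(y_n,V_n)=(x,B)$; then $x\in N_n\subseteq V_n=B\subseteq U$, so $\{N_n:n\in\omega\}$ is a network.

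\textbf{Direction $(\Rightarrow)$, countability.} Given Bob's winning strategy $\sigma$, I first convert it into a winning strategy for Bob in Rothberger$(X)$. Fix a countable network ${\cal M}$ for $X$. For every open cover ${\cal U}$ of $X$, the family $\widetilde{\cal U}:=\{M\in{\cal M}:M\subseteq U\text{ for some }U\in{\cal U}\}$ is a countable network (given $x\in X$ and open $W\ni x$, pick $U\in{\cal U}$ with $x\in U$ and then $M\in{\cal M}$ with $x\in M\subseteq U\cap W$). When Alice plays open covers ${\cal U}_n$ in Rothberger$(X)$, Bob feeds $\widetilde{\cal U}_n$ to $\sigma$, receives $N_n$, and returns some $U_n\in{\cal U}_n$ with $N_n\subseteq U_n$; since $\{N_n\}$ is a network it covers $X$, so $\{U_n\}$ does too. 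Since $X$ is Hausdorff with $nw(X)=\omega$, every singleton is $G_\delta$: for $y\neq x$, Hausdorff produces disjoint open $U_y\ni x$ and $V_y\ni y$, and ${\cal M}$ yields $M_y\in{\cal M}$ with $y\in M_y\subseteq V_y$, whence $x\notin\overline{M_y}$ and $\{x\}=\bigcap\{X\setminus\overline M:M\in{\cal M},\ x\notin\overline M\}$. The Galvin--Telgarsky theorem recalled in the introduction then forces $X$ to be countable.

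\textbf{Second countability.} Since $X$ is countable, first countability suffices (a countable local base at each of the countably many points unions to a countable base). Suppose for contradiction that $X$ fails first countability at some $x^*\in X$. I would construct an Alice strategy defeating $\sigma$. At round $n$, Alice plays the countable network ${\cal N}_n=\{\{x\}:x\in X,\,x\neq x^*\}\cup\{\{x^*,y\}:y\in Y_n\}$, where $Y_n\subseteq X\setminus\{x^*\}$ is chosen with $x^*\in\overline{Y_n}$ (so ${\cal N}_n$ is indeed a network). The sequence $(Y_n)$ is constructed adaptively, in response to $\sigma$'s moves, so that the points $y_n$ coming from Bob's picks $\{x^*,y_n\}$ form a set $D=\{y_n:n\in\omega\}$ whose closure misses $x^*$. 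Then $V:=X\setminus\overline D$ is an open neighbourhood of $x^*$ into which no $N_n=\{x^*,y_n\}$ fits, contradicting that $\{N_n\}$ is a network.

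The decisive difficulty is the adaptive construction of $(Y_n)$ for a general non-first-countable $x^*$. The paradigmatic case is the Arens--Fort space, where $Y_n$ can be taken as the union of columns of index $\geq n$: every selection $y_n\in Y_n$ lies in column $\geq n$, so each column meets $D$ finitely, making $D$ closed in $X$ and thereby forcing $\infty\notin\overline D$. In the general case, one exploits that the neighbourhood filter at $x^*$ is not countably generated, combined with the countability of $X$ just established and with adaptive knowledge of $\sigma$, to engineer an analogous ``shift'' that collapses Bob's selections into a closed set missing $x^*$.
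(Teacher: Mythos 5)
Your left-to-right direction and your countability argument are both correct, and the latter is genuinely different from (and slicker than) the paper's: the paper establishes countability by a direct tree analysis of the strategy $\sigma$ (showing that the intersections $\bigcap_{{\cal N}}\overline{\sigma(\dots,{\cal N})}$ along a countable subtree have at most one point and that the resulting points exhaust $X$), whereas you transfer $\sigma$ to a winning strategy for {\sc Bob} in Rothberger$(X)$ via the refined networks $\widetilde{\cal U}=\{M\in{\cal M}:M\subseteq U\mbox{ for some }U\in{\cal U}\}$ and then invoke the Galvin--Telg\'arsky theorem for spaces with $G_\delta$ points. That reduction is clean and correct.

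The second-countability part, however, has a genuine gap, and you have located it yourself: everything hinges on the claim that at a point $x^*$ where $X$ (now known to be countable) is not first countable, {\sc Alice} can adaptively choose sets $Y_n$ with $x^*\in\overline{Y_n}$ so that \emph{every} selection $y_n\in Y_n$ produced by $\sigma$ yields a set $D=\{y_n:n\in\omega\}$ with $x^*\notin\overline{D}$. You verify this only for the Arens--Fort space and then assert that an ``analogous shift'' can be engineered in general; no construction is given, and none is routine. Your local game is exactly the selection game $G_1(\Omega_{x^*},\Omega_{x^*})$ played with sets accumulating at $x^*$, and the statement you need --- that {\sc Bob} has no winning strategy in it whenever $x^*$ has uncountable character in a countable space --- already contains, as the special case where the trace of the neighbourhood filter of $x^*$ on $X\setminus\{x^*\}$ is a free ultrafilter on a countable set, the nontrivial theorem that the second player never has a winning strategy in the ultrafilter selection game; the phrase ``the neighbourhood filter is not countably generated'' does not by itself produce the required $Y_n$'s there. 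This missing lemma is precisely the hard content that the paper's Claims 3 and 4 supply by a different route: from $\sigma$ one extracts, at each node $s$ of a countable subtree, an open set $V_s$ with $\bigcap_{n}\overline{\sigma(\dots,{\cal N}^n_s)}=\{x\}\subseteq V_s\subseteq\bigcup_{n}\overline{\sigma(\dots,{\cal N}^n_s)}$, and one shows $\{V_s:s\in\omega^{<\omega}\}$ is a base by running the game down a branch that witnesses any alleged failure. To complete your proof you must either prove your local lemma in full generality or replace this step by an argument of the paper's type.
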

\begin{proof} 
		Let ${\Bbb M}$ be the collection of all countable networks of $X$. Let $\sigma$ a winning strategy for Bob.\\ 
		First we prove that the space is countable.\\
			{Claim 1.} $|\bigcap_{{\mathcal N}\in{\Bbb M}}\overline{\sigma({\mathcal N})}|\leq 1$.\\ Indeed, suppose to have two points, say $x$ and $y$, that are in all the clousure of the possible answers to $\mathcal N$, for any ${\mathcal N}\in {\Bbb M}$. Since the space is $T_1$, there exist a closed set $C_x$ containing $x$ and not $y$, and a closed set $C_y$ containing $y$ and not $x$. Pick a countable network $\mathcal N$. If $N\in\mathcal N$ is such that $x\in N$, then $C_x\cap N\subset C_x$ and if $N\in\mathcal N$ is such that $y\in N$, then $C_y\cap N\subset C_y$; then it is possible to construct a countable network such that any elements  of it contains at most one of the points.\\
		{Claim 2.} There exists a countable ${\Bbb M}'\subset {\Bbb M}$ such that $\bigcap_{{\mathcal N}\in{\Bbb M}'}\overline{\sigma({\mathcal N})
		}=\bigcap_{{\mathcal N}\in{\Bbb M}}\overline{\sigma({\cal N})}$.\\
	 Indeed, if $\bigcap_{{\mathcal N}\in{\Bbb M}}\overline{\sigma({\cal N})}=\{x\}$ (it is the same if $\bigcap_{{\mathcal N}\in{\Bbb M}}\overline{\sigma({\cal N})}=\emptyset$), the complements of all the closures form an open cover of $X\setminus\{x\}$ (or $X$) and then, since having countable network implies hereditary Lindel\"ofness, we can obtain a countable subcover of $X\setminus \{x\}$ (or of the all space $X$).\\
		Note that it is straightforward to prove the claims 1. and 2.  for any inning. Consider the following tree of possible evolution of the R-nw-selective game on $X$.
		By claim 2. there exists $({\mathcal N}_{\emptyset}^n)_{n\in\omega}$, that is countably many possible choices of {\sc Alice} in the first inning ${\mathcal N}_{\emptyset}^0, {\mathcal N}_{\emptyset}^1,  {\mathcal N}_{\emptyset}^2, ...$, such that 
		$\bigcap_{{\mathcal N}\in{\Bbb M}}\overline{\sigma({\mathcal N})}
		=\bigcap_{n\in\omega}\overline{\sigma({\mathcal N}_{\emptyset}^n)}$.\\
		Fix, for example, the branch with ${\mathcal N}^0_{\emptyset}$ then there exists a sequence $({\mathcal N}_{<0>}^n)_{n\in\omega}$ such that 
		$\bigcap_{{\mathcal N}\in{\Bbb M}}\overline{\sigma({\mathcal N}^0_{\emptyset}, {\mathcal N})}
		=\bigcap_{n\in\omega}\overline{\sigma({\mathcal N}^0_{\emptyset}, {\mathcal N}_{<0>}^n)}$.\\
		Again consider, for example, ${\mathcal N}^1_{<0>}$, then there exists a sequence $({\mathcal N}_{<0,1>}^n)_{n\in\omega}$ such that $\bigcap_{n\in\omega}\overline{\sigma({\mathcal N}^0_{\emptyset}, {\mathcal N}_{<0>}^{1}, {\mathcal N}_{<0,1>}^n)}=\bigcap_{{\mathcal N}\in{\Bbb M}}\overline{\sigma({\mathcal N}^0_{\emptyset}, {\mathcal N}_{<0>}^{1}, {\mathcal N})}$.
		By Claim 1, each intersection is empty or contains only one element. If the intersection $\bigcap_{n\in\omega}\overline{\sigma({\mathcal N}^n_{\emptyset})}$ is non-empty we call this element $x^\emptyset$, otherwise we go on; if $\bigcap_{n\in\omega}\overline{\sigma({\mathcal N}^0_{\emptyset}, {\mathcal N}_{<0>}^n)}$ is not empty we call this element $x^{<0>}$;
		if the intersection $\bigcap_{n\in\omega}\overline{\sigma({\mathcal N}^0_{\emptyset}, {\mathcal N}_{<0>}^{1}, {\mathcal N}_{<0,1>}^n)}$ is not empty we call this element $x^{<0,1>}$, and so on.
		We obtain a subset $X_0=\{x^s: s\in \omega^{<\omega}\}$ and now we want to prove that $X_0=X$. By contradiction, assume there exists $y\in X\setminus X_0$. Then $y\notin \bigcap_{n\in\omega}\overline{\sigma({\mathcal N}^n_{\emptyset})}$; hence there exists an element of the sequence $\{\sigma({\mathcal N}_\emptyset^n): n\in\omega\}$, say $\sigma({\mathcal N}_{\emptyset}^{k_0})$, such that $y$ does not belong to it. By hypotesis, $y\notin \bigcap_{n\in\omega}\overline{\sigma({\mathcal N}^{k_0}_{\emptyset}, {\mathcal N}_{<k_0>}^n)}$; hence there exists an element of $\{\sigma({\mathcal N}_{<k_0>}^n): n\in\omega\}$, say $\sigma({\mathcal N}_{<k_0>}^{k_1})$, such that $y$ does not belong to it. Again,  
		$y\not \in \bigcap_{n\in\omega}\overline{\sigma({\mathcal N}^{k_0}_{\emptyset}, {\mathcal N}_{<k_0>}^{k_1}, {\mathcal N}_{<k_0,k_1>}^n)}$,
		there exists an element of $\{\sigma({\mathcal N}_{<k_0,k_1>}^n): n\in\omega\}$, say $\sigma({\mathcal N}_{<k_0,k_1>}^{k_2})$, such that $y$ does not belong to it. Proceeding in this way we obtain a branch consisting of elements that do not contain $y$; a contradiction, because such a branch is a network because $\sigma$ is a winning strategy for {\sc Bob}. Then $X$ is countable.\\
		Now we prove that $X$ is secound countable.\\
		{Claim 3.} If $\bigcap_{{\mathcal N}\in{\Bbb M}}\overline{\sigma({\mathcal N})}=\{x\}$, there exists an open set $V$ such that $x\in V\subset \bigcup_{{\mathcal N}\in{\Bbb M}}\overline{\sigma({\mathcal N})}$.\\
		Indeed, assume by contradiction that for every open set $V$ such that $x\in V$ there exists $y_V\in V\setminus \overline{\sigma({\mathcal N})}$, for every ${\mathcal N}\in {\Bbb M}$. Let ${\mathcal N}$ be a countable  network and consider the family ${\mathcal N}^\prime=({\mathcal N}\setminus {\mathcal N}_x)\cup \{(x,y_V): V\in\tau_x\}$, where $\tau_x$ denotes the family of all open sets containing $x$ and ${\mathcal N}_x=\{N\in{\mathcal N}: x\in \overline{N}\}$.
		Since $X$ is countable, ${\mathcal N}^\prime$ is countable. Now we prove that ${\mathcal N}^\prime$ is a network. Let  $y\in X$, $y\neq x$, and $A$ be an open set such that $y\in A$. Since $X$ is $T_2$, there exists an open set $B$ such that $y\in B$ and $x\notin \overline{B}$. Then there exists $N\in{\mathcal N}$ such that $y\in N\subset A\cap B$. Therefore $N\in {\mathcal N}\setminus {\mathcal N}_x$.
		\\
		{Claim 4.} If $\bigcap_{{\mathcal N}\in{\Bbb M}}\overline{\sigma({\mathcal N})}=\{x\}$, 
		there exists ${\Bbb M}^\prime\subset{\Bbb M}$ countable such that 
		$\bigcap_{{\mathcal N}\in{\Bbb M}^\prime}\overline{\sigma({\mathcal N})}=\{x\}$ and also such that $\bigcup_{{\mathcal N}\in{\Bbb M}^\prime}\overline{\sigma({\mathcal N})}=\bigcup_{{\mathcal N}\in{\Bbb M}}\overline{\sigma({\mathcal N})}$.\\ 
		Recall that, by Claim 2 there exists a countable subset ${\Bbb M}^*\subset {\Bbb M}$  such that $\bigcap_{{\mathcal N}\in{\Bbb M}}\overline{\sigma({\mathcal N})}=\bigcap_{{\mathcal N}\in{\Bbb M}^*}\overline{\sigma({\mathcal N})}$; further, since $X$ is countable,  $\bigcup_{{\mathcal N}\in{\Bbb M}}\overline{\sigma({\mathcal N})}$ is countable and then there exists a countable subset ${\Bbb M}^{**}\subset {\Bbb M}$ such that $\bigcup_{{\mathcal N}\in{\Bbb M}^{**}}\overline{\sigma({\mathcal N})}=\bigcup_{{\mathcal N}\in{\Bbb M}}\overline{\sigma({\mathcal N})}$. Then ${\Bbb M}^\prime={\Bbb M}^*\cup{\Bbb M}^{**}$.\\
		Even Claims 3 and 4 are clearly true for each inning.
		Consider the construction of the tree in the previous part of the proof. We know that $|\bigcap_{n\in\omega}\overline{\sigma({\mathcal N}^n_{\emptyset})}|\leq 1$. If $\bigcap_{n\in\omega}\overline{\sigma({\mathcal N}^n_{\emptyset})}\neq\emptyset$, fix $V_{\emptyset}$ as in Claim 3. If $\bigcap_{k\in\omega}\overline{\sigma({\mathcal N}^n_{\emptyset}, {\mathcal N}^k_{<n>})}\neq\emptyset$, fix $V_{<n>}$ as in Claim 3 and so on. Now we prove that $\{V_s: s\in \omega^{<\omega}\}$ is a base. Assuming the contrary, there exist $x\in X$ and an open set $A$ with $x\in A$ such that for every $s\in \omega^{<\omega}$ such that $x\in V_s$, $V_s$ is not contained in $A$. 
		In the first inning, we have a family ${\Bbb M}^\prime$ of countably many networks obtained as in Claim 4. Consider the intersection $\bigcap_{N\in{\Bbb M}^\prime}\overline{\sigma({\mathcal N})}$.
		If $\bigcap_{N\in{\Bbb M}^\prime}\overline{\sigma({\mathcal N})}=\emptyset$, we can pick a ${\mathcal N}\in{\Bbb M}^\prime$, such that $x\notin \sigma({\mathcal N})$. If $\bigcap_{N\in{\Bbb M}^\prime}\overline{\sigma({\mathcal N})}=\{y\}$ we have two cases: if $y\neq x$, we can pick a ${\mathcal N}\in{\Bbb M}^\prime$, such that $x\notin \sigma({\mathcal N})$; if $y=x$, then we can pick, if there exists a ${\mathcal N}\in{\Bbb M}^\prime$, such that $x\not\in \sigma({\mathcal N})$, otherwise by hypothesis and Claim 3 we can pick a ${\mathcal N}\in{\Bbb M}^\prime$, such that $\overline{\sigma({\mathcal N})}$ is not contained in $A$. Then, proceeding in this way for each inning, we find a branch of the tree, i.e., a R-nw-selective$(X)$ in which {\sc Alice} has a winning strategy, a contradiction. 
	\end{proof}

The following proposition shows that the (Point, Open)-set game is a good candidate to be the dual of the R-nw-selective game. 
\begin{proposition}\rm
	Let $X$ be a space. The following implications hold.
	\begin{enumerate} 
		\item {\sc Alice} $\uparrow $ PO-set$(X) \implies$  {\sc Bob} $\uparrow $ R-nw-selective$(X)$.
		\item {\sc Alice} $\uparrow $ R-nw-selective$(X)\implies $ {\sc Bob} $\uparrow $ PO-set$(X)$.
		\item {\sc Bob} $\uparrow $ R-nw-selective$(X)\implies$  {\sc Alice} $\uparrow $ PO-set$(X)$.
		
	\end{enumerate}
\end{proposition}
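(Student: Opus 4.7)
All three implications rest on the same elementary observation that ties the two games together: whenever $\mathcal{N}$ is a network for $X$ and $(x,U)$ is a pair with $x\in U$ open, there exists some $N\in\mathcal{N}$ with $x\in N\subseteq U$. This lets one translate moves back and forth between R-nw-selective$(X)$ and PO-set$(X)$. Parts (1) and (2) will be obtained by direct strategy simulation in the same style used for the classical Point-open / Rothberger duality, and part (3) will follow at once from Proposition~\ref{prop2}.

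For (1), I would fix a winning strategy $\tau$ for {\sc Alice} in PO-set$(X)$ and use it to build a strategy $\beta$ for {\sc Bob} in R-nw-selective$(X)$ by running $\tau$ on the side. At inning $n$, having previously chosen $N_0,\dots,N_{n-1}$ and being faced with a network $\mathcal{N}_n$ played by {\sc Alice}, {\sc Bob} consults $\tau$ on the history $(N_0,\dots,N_{n-1})$ to obtain a pair $(x_n,U_n)$ and then picks some $N_n\in\mathcal{N}_n$ with $x_n\in N_n\subseteq U_n$, which exists by the network property. The sequence $(N_n)_{n\in\omega}$ so produced is at the same time a legal run of {\sc Bob} against $\tau$ in PO-set$(X)$; since $\tau$ is winning, $\{N_n:n\in\omega\}$ is a network, and $\beta$ wins R-nw-selective$(X)$.

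Implication (2) will reverse the roles. Given a winning strategy $\sigma$ for {\sc Alice} in R-nw-selective$(X)$, I would define a strategy for {\sc Bob} in PO-set$(X)$ that answers a move $(x_n,U_n)$, with history $(x_0,U_0),\dots,(x_{n-1},U_{n-1})$, by recursively computing $\mathcal{N}_k=\sigma(N_0,\dots,N_{k-1})$ and some $N_k\in\mathcal{N}_k$ with $x_k\in N_k\subseteq U_k$, and then playing $N_n$. The resulting run is a legal play of R-nw-selective$(X)$ in which {\sc Alice} follows $\sigma$, so $\{N_n:n\in\omega\}$ fails to be a network; this is precisely the losing condition for {\sc Alice} in PO-set$(X)$, so {\sc Bob} wins.

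For (3), the hypothesis together with Proposition~\ref{prop2} gives that $X$ is countable and second countable. I would fix a countable base $\{B_k:k\in\omega\}$, enumerate all pairs $(x,B_k)$ with $x\in B_k$ as $(x_n,U_n)_{n\in\omega}$, and let {\sc Alice} in PO-set$(X)$ play $(x_n,U_n)$ at inning $n$, ignoring Bob's history. Then, for any legal answers $N_n$, the family $\{N_n:n\in\omega\}$ is a network: given $y\in X$ and an open neighborhood $V$ of $y$, pick $B_k$ with $y\in B_k\subseteq V$ and let $n$ be the inning at which Alice played $(y,B_k)$; then $y\in N_n\subseteq U_n=B_k\subseteq V$. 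There is no deep obstacle here: in the simulations of (1) and (2) one only has to verify that the induced strategies depend only on legal history (which they do by construction) and that the selections ``any $N_n\in\mathcal{N}_n$ with $x_n\in N_n\subseteq U_n$'' can be made deterministic, which is achieved by fixing once and for all a well-ordering of the sets involved.
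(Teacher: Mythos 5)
Your proposal is correct and follows essentially the route the paper intends: the paper declares parts (1) and (2) trivial strategy translations (which you carry out explicitly and accurately, with the winning/losing conditions of the two games matched up correctly) and derives part (3) from Proposition~\ref{prop2} by having {\sc Alice} enumerate all point--basic-open pairs of the countable second countable space, exactly as you do.
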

\begin{proof}
	The proof of points 1. and 2. is trivial. By Proposition \ref{prop2} it is straightforward to prove the point 3. 	
\end{proof}
\begin{question}\rm
	Does {\sc Bob} $\uparrow $ PO-set$(X)$ imply {\sc Alice} $\uparrow $ R-nw-selective$(X)$?
\end{question}
Now we study the determinacy of the R-nw-selective game.
\begin{proposition}\rm\label{prop1}
	Let $X$ be a space with $nw(X)=\omega$. If $|X|<cov({\cal M})$ and $w(X)<cov({\cal M})$, then {\sc Alice} $\not\uparrow$ R-nw-selective$(X)$. 
\end{proposition}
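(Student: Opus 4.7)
The plan is to apply the forcing principle for countable posets stated in the introduction: if $\mathbb{P}$ is a countable poset and $\mathcal{D}$ is a family of dense sets with $|\mathcal{D}| < cov(\mathcal{M})$, then a generic filter meeting all members of $\mathcal{D}$ exists. Given any strategy $\sigma$ for \textsc{Alice}, I will build a countable forcing poset whose conditions are partial plays against $\sigma$, and I will engineer a small family of dense sets whose meeting forces the resulting play to be won by \textsc{Bob}.

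First, I would form the \emph{strategy tree} $T_\sigma$ whose nodes are the finite sequences $(N_0,\dots,N_{k-1})$ with $N_i\in \sigma(N_0,\dots,N_{i-1})$ for every $i<k$. Since $\sigma(N_0,\dots,N_{i-1})$ is always a countable network, each node has countably many immediate successors, hence $|T_\sigma|\leq \aleph_0$. I would regard $T_\sigma$ as a poset $\mathbb{P}$ ordered by end-extension ($p\leq q$ iff $p$ extends $q$); two conditions are compatible exactly when one extends the other, so a filter on $\mathbb{P}$ corresponds to a branch.

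Next, I would fix a base $\mathcal{B}$ for $X$ with $|\mathcal{B}|=w(X)<cov(\mathcal{M})$ and, for each pair $(x,U)$ with $x\in X$, $U\in\mathcal{B}$, $x\in U$, define
\[
D_{x,U}=\{\,p\in\mathbb{P}:\ \exists i<|p|\ \text{with}\ x\in p(i)\subseteq U\,\},
\]
together with $D_n=\{p\in\mathbb{P}:|p|\geq n\}$ for each $n\in\omega$. Density of $D_n$ is immediate, and density of $D_{x,U}$ is the key computation: given any $q\in\mathbb{P}$, the set $\sigma(q)$ is a countable network of $X$, so some $N\in \sigma(q)$ satisfies $x\in N\subseteq U$; then $q\,\widehat{}\,\langle N\rangle$ is a legal extension in $D_{x,U}$. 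The whole family has cardinality at most $|X|\cdot w(X)+\aleph_0<cov(\mathcal{M})$, using the hypotheses $|X|,w(X)<cov(\mathcal{M})$ and the fact that the product of two infinite cardinals below an uncountable cardinal stays below it.

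Finally, I would invoke the forcing fact to obtain a generic filter $G$ meeting every $D_{x,U}$ and every $D_n$. The dense sets $D_n$ guarantee that $G$ determines an infinite branch $(N_0,N_1,\dots)$ of $T_\sigma$, which is by construction a legitimate play by \textsc{Bob} against $\sigma$; the dense sets $D_{x,U}$ guarantee that for every $x\in X$ and every basic open $U\ni x$ some $N_i$ satisfies $x\in N_i\subseteq U$, so $\{N_n:n\in\omega\}$ is a network since $\mathcal{B}$ is a base. Hence \textsc{Bob} wins this play and $\sigma$ is not a winning strategy. Since $\sigma$ was arbitrary, $\textsc{Alice}\not\uparrow$ R-nw-selective$(X)$. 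The main point to watch is the two cardinal bookkeeping at the end of the third step---ensuring the enumeration of dense sets stays strictly below $cov(\mathcal{M})$---but that is exactly what the two hypotheses were designed to deliver.
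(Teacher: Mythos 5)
Your proposal is correct and follows essentially the same route as the paper: build the countable tree of plays against \textsc{Alice}'s strategy $\sigma$, view it as a poset ordered by end-extension, define the dense sets $D_{(x,B)}$ indexed by points and basic open sets, observe that there are fewer than $cov(\mathcal{M})$ of them, and apply the stated forcing principle for countable posets to extract a branch along which \textsc{Bob} wins. Your write-up is in fact slightly more careful than the paper's, since you explicitly add the dense sets $D_n$ guaranteeing the generic branch is infinite and spell out the cardinal arithmetic.
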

\begin{proof}
	Suppose, by contradiction, that $\sigma$ is a winning strategy for {\sc Alice} in the R-nw-selective$(X)$ and fix a base $\cal B$ of cardinality $w(X)$. Construct a countable tree using the strategy $\sigma$ in such a way that $\sigma(\langle\rangle)={\cal N}_0$; for each $N_0\in {\cal N}_0$ apply the strategy and so on.
	Look at this tree as the poset of all finite branches ordered with the inverse natural extention. The nodes in this tree are the countable networks that are images through the function $\sigma$. Fix $x\in X$ and $B\in {\cal B}$ containing $x$, the set $D_{(x,B)}$ of all the finite sequences of the tree such that there exists an element of the sequence that is a $\sigma(\langle ..., N\rangle)$ with $x\in N\subset B$,
	is dense in the poset. Since the cardinality of the family $\{D_{(x,B)}: x\in X \hbox{ and } B\in {\cal B}\}$ is less than $cov({\cal M})$ there exists a generic filter whose union is a branch of the tree and that intersects all the dense sets of the family. Therefore we obtain the contradiction, in particular this branch is a R-nw-selective$(X)$ in which {\sc Bob} wins.
\end{proof}

\begin{example}\rm 
	($\omega_1<cov({\cal M})$) Consider a subspace $X\subset {\Bbb R}$ with cardinality $\omega_1$. By Proposition \ref{prop2} and Proposition \ref{prop1} the R-nw-selective game on $X$ is indeterminate.
\end{example}
\begin{question}\rm
	Is there any ZFC example of a space in which the R-nw-selective game turns out to be indeterminate?
\end{question}

The following diagram shows all the relations found above.
\begin{flushleft}
	\vspace{0.5cm}
	\begin{picture}(300,100) 
	\put(130,90){\textsf{{{\sc Alice} $\not\uparrow$ R-nw-selective$(X)$}}}
	
	\put(-50,-5){\textsf{{$X$ is countable + secound countable}}} 
	\put(30,73){\vector(0,-1){55}}
	\put(30,18){\vector(0,1){55}}
	\put(-50,90){\textsf{{{\sc Bob} $\uparrow$ R-nw-selective$(X)$}}}
	\put(90,93){\vector(1,0){25}}
	\put(275,93){\vector(1,0){25}}
	\put(270,-5){\textsf{{$nw(X)\leq\omega+ \,\, |X|<cov({\cal M})$ +}}}
	\put(305,-20){\textsf{{$w(X)<cov({\cal M})$}}} 
	\put(310,90){\textsf{{R-nw-selective}}}
	\put(340,18){\vector(0,1){55}}
	\put(300,18){\vector(-1,1){55}}
	\end{picture}
\end{flushleft}
\vspace{0.5cm}
\begin{question}\rm\label{q1}
	Does R-nw-selective property on a space $X$ imply {\sc Alice} $\not\uparrow$ R-nw-selective$(X)$?
\end{question}

Recall the following result.
\begin{proposition}\rm\cite{BGZ}
	Let $X$ be a space without isolated points, with $nw(X)=\omega$ and $w(X)<cov({\cal M})$. Then the following are equivalent.
	\begin{enumerate}
		\item $|X|<cov({\cal M})$;
		\item $X$ is R-nw-selective.
		\end{enumerate}
    \end{proposition}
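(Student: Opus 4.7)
The plan is to tackle the two implications separately, starting with the cleaner direction $(1)\Rightarrow(2)$. The idea is to exploit the guessing characterisation of $cov(\mathcal{M})$ recalled in Section~1. I fix a base $\mathcal{B}$ of $X$ with $|\mathcal{B}|=w(X)<cov(\mathcal{M})$, and given a sequence $(\mathcal{N}_n)_{n\in\omega}$ of countable networks I enumerate each as $\mathcal{N}_n=\{N_k^n:k\in\omega\}$. For every pair $(x,B)$ with $x\in X$, $B\in\mathcal{B}$ and $x\in B$, the network property of $\mathcal{N}_n$ makes the set $\{k:x\in N_k^n\subseteq B\}$ non-empty, so I set $f_{x,B}(n):=\min\{k:x\in N_k^n\subseteq B\}$. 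The family $\{f_{x,B}\}\subseteq\omega^\omega$ has cardinality at most $|X|\cdot|\mathcal{B}|<cov(\mathcal{M})$, so some $g\in\omega^\omega$ agrees with each $f_{x,B}$ infinitely often. Setting $F_n:=N_{g(n)}^n$, for any $x\in X$ and open $U\ni x$ I pick $B\in\mathcal{B}$ with $x\in B\subseteq U$ and any witness of $g(n)=f_{x,B}(n)$ yields $x\in F_n\subseteq U$, so $\{F_n:n\in\omega\}$ is a network.

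For $(2)\Rightarrow(1)$ I would argue by contrapositive. Assuming $|X|\geq cov(\mathcal{M})$, I pick a family $\{f_\alpha:\alpha<cov(\mathcal{M})\}\subseteq\omega^\omega$ admitting no global guesser (so that for every $g$ there exists $\alpha$ with $g(n)\neq f_\alpha(n)$ for cofinitely many $n$) and inject it as distinct points $\{x_\alpha:\alpha<cov(\mathcal{M})\}\subseteq X$. The plan is then to construct, at each stage $n$, a countable network $\mathcal{N}_n$ of $X$ that encodes the value $f_\alpha(n)$ at $x_\alpha$: for each $\alpha$ I fix an open $U_\alpha\ni x_\alpha$ together with a separating point $y_\alpha\in U_\alpha\setminus\{x_\alpha\}$ (available by $T_2$ and the absence of isolated points), and arrange the enumeration of $\mathcal{N}_n$ so that the index $k=f_\alpha(n)$ is essentially the only one for which $x_\alpha\in N_k^n\subseteq U_\alpha\setminus\{y_\alpha\}$. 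The non-guessing property then blocks every selection $g$: the witnessing $\alpha$ provides an $(x_\alpha,U_\alpha\setminus\{y_\alpha\})$ at which the network property of $\{F_n\}$ fails.

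The main obstacle is the actual construction of the networks $\mathcal{N}_n$ above. The available ingredients are that $\{M\in\mathcal{M}:x_\alpha\in M\}$ is countable with intersection $\{x_\alpha\}$ for any countable network $\mathcal{M}$ (so individually each $x_\alpha$ admits arbitrarily good network refinements), and that $w(X)<cov(\mathcal{M})$ keeps the number of basic open sets one must honour small. The delicate step is coordinating these refinements uniformly in $\alpha<cov(\mathcal{M})$ while keeping each $\mathcal{N}_n$ countable and still a network of the whole of $X$, and simultaneously preventing a selection from producing a singleton $\{x_\alpha\}$, which would trivially satisfy the network property at $x_\alpha$. This is precisely where the no-isolated-points hypothesis is essential, and it is the bookkeeping here that I expect to consume the bulk of the work.
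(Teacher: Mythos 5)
The paper itself does not prove this proposition: it is quoted from the preprint \cite{BGZ} (``Recall the following result''), so there is no in-paper argument to compare against and your attempt has to stand on its own. Your direction $(1)\Rightarrow(2)$ does stand: defining $f_{x,B}(n)=\min\{k: x\in N^n_k\subseteq B\}$, invoking the guessing characterization of $cov({\cal M})$ recalled in Section~1 for the family $\{f_{x,B}\}$ of size $|X|\cdot|{\cal B}|<cov({\cal M})$, and selecting $F_n=N^n_{g(n)}$ is complete and correct (and, as you implicitly notice, does not use the absence of isolated points). It is the same mechanism the authors use elsewhere in the paper, e.g.\ in the generic-filter argument of Proposition~2.6.

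Direction $(2)\Rightarrow(1)$, however, is not a proof but a plan, and you say so yourself; worse, the plan as stated has a structural obstruction, not just ``bookkeeping''. You want each ${\cal N}_n$ to be a countable network of $X$ in which $k=f_\alpha(n)$ is ``essentially the only'' index with $x_\alpha\in N^n_k\subseteq U_\alpha\setminus\{y_\alpha\}$. But $U_\alpha\setminus\{y_\alpha\}$ is itself open, and the network property of ${\cal N}_n$ at $x_\alpha$ forces, for every open $B$ with $x_\alpha\in B\subseteq U_\alpha\setminus\{y_\alpha\}$, some $N\in{\cal N}_n$ with $x_\alpha\in N\subseteq B\subseteq U_\alpha\setminus\{y_\alpha\}$. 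Since $x_\alpha$ is not isolated there are in general infinitely many such $B$ requiring distinct witnesses (and you have ruled out collapsing them all to the singleton $\{x_\alpha\}$), so the set of ``good'' indices at $(x_\alpha,U_\alpha\setminus\{y_\alpha\})$ is typically infinite and cannot be pinned to the single value $f_\alpha(n)$. Any workable construction must encode the non-guessable family differently — e.g.\ so that a \emph{successful} selection by Bob canonically produces a function that guesses every $f_\alpha$, rather than making the designated index unique — and that construction is precisely the content of the implication. As it stands, the missing step is the whole of this direction, so the proposal establishes only $(1)\Rightarrow(2)$.
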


Then it is possible to give a partial answer to Question \ref{q1}.

\begin{proposition}\rm
	Let $X$ be a space without isolated points, with $nw(X)=\omega$ and $w(X)<cov({\cal M})$. Then the following are equivalent.
	\begin{enumerate}
		\item $|X|<cov({\cal M})$;
		\item {\sc Alice} $ \not\uparrow$ R-nw-selective$(X)$;
		\item $X$ is R-nw-selective.
	\end{enumerate}
\end{proposition}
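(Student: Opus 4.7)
The plan is to close the cyclic chain of implications $(1)\Rightarrow(2)\Rightarrow(3)\Rightarrow(1)$; each arrow is an immediate invocation of a result already at our disposal, so no new combinatorial work is required, only careful bookkeeping.

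For $(1)\Rightarrow(2)$ I would apply Proposition \ref{prop1} verbatim: the hypothesis $|X|<cov({\cal M})$ combined with the standing assumption $w(X)<cov({\cal M})$ gives precisely its premises, and hence {\sc Alice} $\not\uparrow$ R-nw-selective$(X)$. For $(2)\Rightarrow(3)$ I invoke Remark \ref{remarkina}(\ref{3}): writing the R-nw-selective game as $G_1({\cal A},{\cal B})$, where ${\cal A}$ is the family of countable networks of $X$ and ${\cal B}$ is the family of all networks of $X$, the assumption that {\sc Alice} has no winning strategy in $G_1({\cal A},{\cal B})$ implies the selection principle $S_1({\cal A},{\cal B})$, which is literally the definition of R-nw-selectivity.

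Finally, $(3)\Rightarrow(1)$ is precisely the content of the proposition from \cite{BGZ} recalled just above the statement, whose hypotheses (no isolated points, $nw(X)=\omega$, $w(X)<cov({\cal M})$) coincide with ours. Thus the present proposition is essentially a clean insertion of the game-theoretic condition into an already-known equivalence: the genuinely delicate arguments live inside Proposition \ref{prop1} (the generic-filter argument on the poset of finite branches of {\sc Alice}'s strategy tree) and inside the cited cardinality characterization of R-nw-selectivity from \cite{BGZ}. For this reason I expect no real obstacle; the only point to state carefully is the identification of the R-nw-selective game with $G_1({\cal A},{\cal B})$ for the specific choice of ${\cal A}$ and ${\cal B}$ above, so that Remark \ref{remarkina} applies without ambiguity.
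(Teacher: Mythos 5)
Your proposal is correct and matches the paper's (implicit) argument exactly: the paper presents this proposition as an immediate consequence of Proposition \ref{prop1} (giving $(1)\Rightarrow(2)$), Remark \ref{remarkina}(\ref{3}) applied to $G_1$ of countable networks versus networks (giving $(2)\Rightarrow(3)$), and the recalled result from \cite{BGZ} (giving $(3)\Rightarrow(1)$). No further comment is needed.
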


\bigskip
Now we show that if {\sc Bob} is forced to select a fixed number of element from each network the obtained game is equivalent to the R-nw-selective game.
Let ${\sc Nw}$ denote the class of all countable networks of a fixed space $X$. Let $k\in \omega$ and $G_k({\sc Nw}, {\sc Nw})$ on $X$ be the game played in the following way: {\sc Alice} chooses a countable network ${\cal  N}_0$ and {\sc Bob} answers picking a subset ${\cal F}_0\subset {\cal N}_0$ such that $|{\cal F}_0|=k$. Then {\sc Alice} chooses another countable network ${\cal N}_1$ and {\sc Bob} answers picking a subset ${\cal F}_1\subset {\cal N}_1$ such that $|{\cal F}_1|=k$ and so on for countably many innings. At the end {\sc Bob} wins if the set $\bigcup \{{\cal F}_n:n\in\omega\}$ of his selections is a network.

\begin{proposition}\rm
	{\sc Alice} $\uparrow$ R-nw-selective($X$) if, and only if, {\sc Alice} $\uparrow G_k({\sc Nw},{\sc Nw})$ on $X$.
\end{proposition}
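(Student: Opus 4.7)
I would establish the biconditional in two directions.

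\emph{Direction $(\Leftarrow)$, ``{\sc Alice}$\uparrow G_k\Rightarrow${\sc Alice}$\uparrow$ R-nw-selective''.} This is a routine batching argument. Given a winning strategy $\tau$ for {\sc Alice} in $G_k({\sc Nw},{\sc Nw})$, I would define a strategy $\sigma$ for R-nw-selective$(X)$ that forces {\sc Alice} to play the same network throughout each block of $k$ consecutive innings: at innings $qk,qk+1,\ldots,qk+k-1$ she plays $\mathcal{N}_q:=\tau(\mathcal{F}_0^*,\ldots,\mathcal{F}_{q-1}^*)$, where $\mathcal{F}_j^*$ is the set of {\sc Bob}'s $G_1$-picks during block $j$, padded (if necessary) by arbitrary fixed extra elements of $\mathcal{N}_j$ until $|\mathcal{F}_j^*|=k$, so that the simulated $G_k$-play against $\tau$ is legal. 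The key observation is that the family of networks is closed under supersets, so by contrapositive every subset of a non-network is still a non-network. Since $\tau$ is winning, $\bigcup_q\mathcal{F}_q^*$ is not a network, and therefore $\bigcup_q\mathcal{F}_q\subseteq\bigcup_q\mathcal{F}_q^*$---the actual set of {\sc Bob}'s $G_1$-picks---is not a network either, so $\sigma$ is winning.

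\emph{Direction $(\Rightarrow)$, ``{\sc Alice}$\uparrow$ R-nw-selective$\Rightarrow${\sc Alice}$\uparrow G_k$''.} This is the delicate direction, since $G_k$ gives {\sc Bob} $k$ picks per round while affording {\sc Alice} only one network to respond with. Given a winning strategy $\sigma$ in R-nw-selective$(X)$, the natural attempt is to fold $k$ consecutive R-nw-selective sub-rounds into one $G_k$-round: at $G_k$-inning $n$ with history $(\mathcal{F}_0,\ldots,\mathcal{F}_{n-1})$, let $h_n$ be a canonical enumeration of $\bigcup_{i<n}\mathcal{F}_i$ (ordering inning by inning and breaking ties inside each $\mathcal{F}_i$ by a fixed enumeration), and set $\tau(\mathcal{F}_0,\ldots,\mathcal{F}_{n-1}):=\sigma(h_n)$.

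\emph{The main obstacle.} Once {\sc Alice} commits to the single network $\sigma(h_n)$ in a $G_k$-round, {\sc Bob}'s $k$ picks from it are not automatically compatible with a $\sigma$-consistent $G_1$-play, because in R-nw-selective$(X)$ she would update to $\sigma(h_n\cdot b_0)$, $\sigma(h_n\cdot b_0\cdot b_1)$, etc., between {\sc Bob}'s successive picks. To overcome this I would attempt one of two routes. Route (i): prove a preliminary normalization lemma saying that whenever {\sc Alice} has any winning strategy in R-nw-selective$(X)$, she has one depending only on the \emph{set} of {\sc Bob}'s previous picks and satisfying a monotonicity condition of the form $\sigma(S\cup\{b\})\subseteq\sigma(S)$ for every legal $b\in\sigma(S)$, after which the single network played in a $G_k$-round serves simultaneously for all $k$ simulated R-nw-selective responses. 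Route (ii): argue combinatorially that, for any $G_k$-play by {\sc Bob} against $\tau$, one can interleave finitely many auxiliary picks into $\bigcup_n\mathcal{F}_n$ so as to exhibit a $\sigma$-consistent R-nw-selective play whose pick-set contains $\bigcup_n\mathcal{F}_n$; then by upward-closedness (as in $(\Leftarrow)$) the set $\bigcup_n\mathcal{F}_n$ itself fails to be a network. I expect route (i) to be cleaner, with the main technical step being the construction of such a monotone, set-dependent winning strategy from an arbitrary winning strategy.
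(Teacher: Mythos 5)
Your direction $(\Leftarrow)$ is correct and complete: batching $k$ consecutive innings of R-nw-selective$(X)$ into one inning of $G_k({\sc Nw},{\sc Nw})$, padding {\sc Bob}'s picks to a $k$-element subset of the current network, and using the fact that every subfamily of a non-network is a non-network is exactly the right argument, and it is the direction the paper omits as trivial. The genuine gap is that direction $(\Rightarrow)$ is not proved. You correctly isolate the obstacle --- the $k$ picks {\sc Bob} extracts from the single network $\sigma(h_n)$ need not be legal moves of a $\sigma$-consistent run of R-nw-selective$(X)$, because $\sigma$ updates its network after each individual pick --- but neither of your two routes is carried out, and both face concrete difficulties. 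For route (i), the natural way to force monotonicity is to replace $\sigma$'s moves by the meet $\{A\cap B: A\in\mathcal N, B\in\mathcal N'\}$ of the networks played so far; but then the picks in the modified game are only \emph{subsets} of legal $\sigma$-picks, and shrinking the members of a family can turn a non-network into a network (a witness $N$ for $(x,U)$ must satisfy both $x\in N$ and $N\subseteq U$, and smaller sets help the second condition), so winningness does not transfer. Enlarging instead (taking unions of the networks played so far, which would make all $k$ picks legal) fails for the symmetric reason: a pick taken from an earlier network need not lie in $\sigma$'s current network, so the simulated play is again not $\sigma$-consistent. For route (ii), a pending pick $F^j_i$ is an element of $\mathcal N_j$ and there is no reason it should belong to any later network $\sigma(h)$ produced by the simulation, so it cannot in general be interleaved as an auxiliary legal pick. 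The normalization lemma on which your plan rests is therefore precisely the missing step, not a routine technicality.

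For comparison: the paper proves only the implication you call delicate, and does so by exactly the naive block simulation you flag --- {\sc Alice} plays $\mathcal N_q$ for $k$ consecutive innings of R-nw-selective$(X)$, {\sc Bob} is assumed to answer with the $k$ elements of $\mathcal F_q$, and the authors conclude ``by hypothesis'' that $\bigcup_n\mathcal F_n$ is not a network, without addressing why that block-repeated play is consistent with any winning strategy for {\sc Alice}. So your diagnosis of the difficulty is sharper than the published argument; but identifying the obstacle is not the same as overcoming it, and as written your proposal establishes only the easy implication.
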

\begin{proof}
	We only prove that if {\sc Alice} $\uparrow$ R-nw-selective($X$) then {\sc Alice} $\uparrow G_k({\sc Nw},{\sc Nw})$ on $X$. Suppose that {\sc Alice} selects a network ${\cal N}_0$ in the $G_k({\sc Nw},{\sc Nw})$. Then {\sc Bob} answers by picking ${\cal F}_0=\{F_1^0,...,F^0_k\}$. Let {\sc Alice} play the network ${\cal N}_0$ in the R-nw-selective$(X)$ for $k$-many innings and assume that {\sc Bob} chooses $F_i^0$, $i=1,...,k$, in the $i$-th inning. Now suppose that {\sc Alice} selects a network ${\cal N}_1$ in the $G_k({\sc Nw},{\sc Nw})$. Then {\sc Bob} answers by picking ${\cal F}_1=\{F_1^1,...,F^1_k\}$. Let {\sc Alice} play the network ${\cal N}_1$ in the R-nw-selective$(X)$ for $k$-many innings and assume that {\sc Bob} chooses $F_i^1$, $i=1,...,k$, in the $(k+i)$-th inning and so on. Then by hypotesis $\bigcup_{n\in\omega}{\cal F}_n$ is not a network. 
	\end{proof}

\begin{proposition}\rm
	{\sc Bob} $\uparrow$ R-nw-selective($X$) if, and only if, {\sc Bob} $\uparrow G_k({\sc Nw},{\sc Nw})$ on $X$.
\end{proposition}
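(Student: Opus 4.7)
The plan is to prove the two directions separately. The forward direction, ${\sc Bob}\uparrow$ R-nw-selective$(X) \Rightarrow {\sc Bob}\uparrow G_k({\sc Nw},{\sc Nw})$ on $X$, I would handle by padding. Fix a winning R-nw-selective strategy $\tau$ for {\sc Bob}. Given a $G_k$-history $({\cal N}_0, {\cal F}_0, \ldots, {\cal N}_n)$, I would recursively designate a ``primary'' element $F_j \in {\cal F}_j$ for each $j<n$, and define the response
\[
\tau'({\cal N}_0, {\cal F}_0, \ldots, {\cal N}_n) = \{\tau({\cal N}_0, F_0, \ldots, F_{n-1}, {\cal N}_n)\} \cup E_n,
\]
where $E_n$ is any $(k-1)$-subset of ${\cal N}_n$, and declare the element produced by $\tau$ to be the primary of the new ${\cal F}_n$. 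The sequence $(F_n)_{n\in\omega}$ is then a legitimate R-nw-selective play in which {\sc Bob} follows $\tau$, so $\{F_n:n\in\omega\}$ is a network, and hence so is the larger set $\bigcup_n {\cal F}_n$, showing $\tau'$ wins.

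For the reverse direction the plan is to invoke Proposition~\ref{prop2}, which characterizes ${\sc Bob}\uparrow$ R-nw-selective$(X)$ as ``$X$ is countable and second countable''. It then suffices to show that ${\sc Bob}\uparrow G_k({\sc Nw},{\sc Nw})$ forces these two properties on $X$; I would do this by adapting the proof of Proposition~\ref{prop2} to the $G_k$ setting. Let $\sigma$ be a winning $G_k$-strategy. The key adjustment is in Claim~1, where I would establish
\[
\Bigl|\,\bigcap_{{\cal N}\in{\Bbb M}}\overline{\bigcup \sigma({\cal N})}\,\Bigr| \leq k.
\]
If there were $k+1$ distinct points $x_0, \ldots, x_k$ in this intersection, the $T_1$-refinement idea of Proposition~\ref{prop2} would yield a countable network every element of which has closure containing at most one of the $x_i$; then the $k$-element set $\sigma({\cal N})$ would have union-closure containing at most $k$ of the $x_i$, contradicting that all $k+1$ lie in the intersection.

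With this strengthened Claim~1 in hand, the tree construction of Proposition~\ref{prop2} carries over almost verbatim, each node now contributing at most $k$ points rather than $1$; the set of ``branch-points'' remains countable, and the same branch-chasing contradiction forces it to exhaust $X$, so $X$ is countable. Analogous modifications of Claims~3 and~4 produce a countable base of open sets (at most $k$ basic open sets per tree node), proving second countability. Proposition~\ref{prop2} then produces the desired R-nw-selective winning strategy from countability plus second countability.

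The main obstacle is precisely the adapted Claim~1: building a countable network every element of which has closure containing at most one point of a prescribed $(k+1)$-element set, while preserving the network property. Proposition~\ref{prop2} handles this for two points via a $T_1$-separation argument; pushing the construction through for $k+1$ points should be routine by iterating the same refinement for each of the finitely many prescribed points, but requires careful bookkeeping so that both the network condition and the closure constraints hold simultaneously.
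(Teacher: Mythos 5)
Your proposal takes essentially the same route as the paper: the forward direction is treated as immediate (padding), and the converse is obtained by showing that a winning $G_k$-strategy for {\sc Bob} forces $X$ to be countable and second countable, adapting the proof of Proposition~\ref{prop2} with the key change being exactly your Claim~1, $|\bigcap_{{\cal N}\in{\Bbb M}}\overline{\bigcup\sigma({\cal N})}|\leq k$, proved by the same $T_1$-refinement that produces a network whose elements' closures each contain at most one of the $k+1$ prescribed points. The remaining claims and the tree argument carry over as you describe, so the proposal is correct and in agreement with the paper's proof.
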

\begin{proof}
	We prove that {\sc Bob} $\uparrow G_k({\sc Nw},{\sc Nw})$ on $X$ implies that the space $X$ is countable and secound countable. In fact the proof is similar to the one of Proposition \ref{prop2}. Let $\sigma$ be a winning strategy for {\sc Bob} in the $G_k({\sc Nw},{\sc Nw})$ on $X$ and ${\Bbb M}$ be the collection of all countable networks of the space $X$. We just need to prove the following claims.\\
	{Claim 1.} 
	$|\bigcap_{{\mathcal N}\in{\Bbb M}}\overline{\bigcup\sigma({\cal N})}|\leq k$.\\
	 We prove the claim for $k=2$. Suppose, by contradiction, that there are three different points $x_1,x_2,x_3\in\bigcap_{{\mathcal N}\in{\Bbb M}}\overline{\bigcup\sigma({\cal N})}$. Since the space is $T_1$, there exist a closed set $C_1$ containing $x_1$ but not $x_2,x_3$, a closed set $C_2$ containing $x_2$ but not $x_1,x_3$ and a closed set $C_3$ containing $x_3$ but not $x_1,x_2$. Pick a countable network $\mathcal N$. For every $N\in\mathcal N$ such that $x_1,x_2,x_3\in N$, replace $N$ with $N\cap C_i$, $i=1,2,3$. In this way we construct a countable network such that any elements of it contains at most one of the points $x_1,x_2,x_3$.\\
	{Claim 2.} There exists ${\Bbb M}'\subset {\Bbb M}$ countable such that $\bigcap_{{\mathcal N}\in{\Bbb M}'}\overline{\bigcup\sigma({\mathcal N})
	}=\bigcap_{{\mathcal N}\in{\Bbb M}}\overline{\bigcup\sigma({\cal N})}$.\\
	{Claim 3.} If $\bigcap_{{\mathcal N}\in{\Bbb M}}\overline{\bigcup\sigma({\mathcal N})}=\{x\}$, there exists an open set $V$ such that $x\in V\subset \bigcup_{{\mathcal N}\in{\Bbb M}}\overline{\bigcup\sigma({\mathcal N})}$.\\	
	{Claim 4.} If $\bigcap_{{\mathcal N}\in{\Bbb M}}\overline{\bigcup\sigma({\mathcal N})}=\{x\}$, 
	there exists ${\Bbb M}^\prime\subset{\Bbb M}$ countable such that 
	$\bigcap_{{\mathcal N}\in{\Bbb M}^\prime}\overline{\bigcup\sigma({\mathcal N})}=\{x\}$ and also such that $\bigcup_{{\mathcal N}\in{\Bbb M}^\prime}\overline{\bigcup\sigma({\mathcal N})}=\bigcup_{{\mathcal N}\in{\Bbb M}}\overline{\bigcup\sigma({\mathcal N})}$.\\ 
    The proof of Claims 2, 3 and 4 are similar to the ones in Proposition \ref{prop2}.
\end{proof}

\section{M-nw-selective game}\label{sec2}
\begin{definition}\rm
	Let $X$ be a space with $nw(X)=\omega$. The M-nw-selective game, denoted by M-nw-selective$(X)$, is played according to then following rules. {\sc Alice} chooses a countable network ${\cal  N}_0$ and {\sc Bob} answers picking a finite subset ${\cal F}_0\subset {\cal N}_0$. Then {\sc Alice} chooses another countable network ${\cal N}_1$ and {\sc Bob} answers in the same way and so on for countably many innings. At the end {\sc Bob} wins if the set $\bigcup \{{\cal F}_n:n\in\omega\}$ of his selections is a network. 
\end{definition}
\begin{proposition}\rm \label{Alice}
	$(MA[{\frak d}])$ Let $X$ be a space with $nw(X)=\omega$. If $|X|<\frak d$ and $w(X)<\frak d$, then {\sc Alice} $\not\uparrow$ M-nw-selective$(X)$. 
\end{proposition}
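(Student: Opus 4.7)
The plan is to adapt the proof of Proposition \ref{prop1} almost verbatim, substituting Martin's axiom for ccc posets with $\mathfrak{d}$-many dense sets in place of the direct countable-poset genericity argument used there. The extra freedom Bob enjoys in M-nw-selective$(X)$ (picking \emph{finite} subsets instead of single elements) does not change the skeleton; it only changes the shape of the tree of plays.

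I would begin by assuming, for contradiction, that $\sigma$ is a winning strategy for \textsc{Alice}, and fixing a base $\mathcal{B}$ with $|\mathcal{B}| = w(X)$. Then I would form the tree $T$ of all finite plays consistent with $\sigma$: at the root, \textsc{Alice} plays $\sigma(\langle\rangle)$; each node at level $n$ is a sequence $\langle \mathcal{F}_0,\ldots,\mathcal{F}_{n-1}\rangle$ where each $\mathcal{F}_i$ is a finite subset of the countable network that $\sigma$ dictates at that stage. Since each $\sigma(s)$ is countable, it has only countably many finite subsets, so $T$ has countable levels and is a countable tree. Ordered by reverse extension, $T$ is therefore $\sigma$-centered, hence ccc.

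Next, for each pair $(x,B)\in X\times\mathcal{B}$ with $x\in B$, I would define
\[
D_{(x,B)}=\bigl\{\, s=\langle\mathcal{F}_0,\ldots,\mathcal{F}_{n-1}\rangle\in T \;:\; \exists\, i<n,\ \exists\, N\in\mathcal{F}_i,\ x\in N\subseteq B\,\bigr\}.
\]
Density is immediate: given any $s\in T$, the next move $\sigma(s)$ is a countable network, so it contains some $N_0$ with $x\in N_0\subseteq B$; extending $s$ by letting \textsc{Bob} play the finite set $\{N_0\}$ produces a condition in $D_{(x,B)}$ refining $s$. The family $\{D_{(x,B)}:x\in X,\ B\in\mathcal{B},\ x\in B\}$ has cardinality at most $|X|\cdot w(X)<\mathfrak{d}$, using the hypotheses $|X|<\mathfrak{d}$ and $w(X)<\mathfrak{d}$.

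Finally, $MA[\mathfrak{d}]$ applied to the ccc poset $T$ and this family of dense sets produces a filter meeting every $D_{(x,B)}$. This filter determines an infinite branch of $T$, i.e.\ a full play of M-nw-selective$(X)$ consistent with $\sigma$, and along this branch $\bigcup_{n\in\omega}\mathcal{F}_n$ is a network for $X$: every $x\in X$ and every basic open $B\ni x$ is witnessed by some selected $N$ with $x\in N\subseteq B$, and networks with respect to a base are networks. This contradicts $\sigma$ being winning for \textsc{Alice}. The only step I expect to require a moment of care is confirming that the tree poset really is ccc (indeed countable, $\sigma$-centered) in the finite-pick setting — once that is observed, the density of each $D_{(x,B)}$ follows directly from the network property of $\sigma(s)$, and $MA[\mathfrak{d}]$ does the combinatorial heavy lifting.
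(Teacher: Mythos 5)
Your proposal is correct and follows essentially the same route as the paper: the paper proves this proposition by declaring it "similar to the proof of Proposition \ref{prop1}", i.e.\ precisely the tree-of-plays poset, the dense sets $D_{(x,B)}$ indexed by points and basic open sets, and a generic filter for fewer than $\mathfrak{d}$ dense sets yielding a winning branch for {\sc Bob}. Your explicit observations that the tree remains countable in the finite-pick setting and that $MA[\mathfrak{d}]$ replaces the $cov(\mathcal{M})$ genericity fact are exactly the adaptations the paper leaves implicit.
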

\begin{proof}
	Similar to the proof of Proposition \ref{prop1}.
\end{proof} 
Recall the following result.
\begin{proposition}\rm\cite{BGZ}
	Let $X$ be a space with $nw(X)=\omega$, $w(X)<{\frak d}$. Then the following conditions are equivalent:
	\begin{enumerate}
		\item $|X|<{\frak d}$;
		\item $X$ is M-nw-selective.
	\end{enumerate}
\end{proposition}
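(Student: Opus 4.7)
The equivalence will be proved as two implications.

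\emph{Direction $(1)\Rightarrow(2)$.} This is the same dominating-family template that underlies the proof of Proposition \ref{Alice}, but carried out for a single selection rather than along a strategy. Fix a base $\mathcal B$ of $X$ with $|\mathcal B|=w(X)<\frak d$ and a sequence $(\mathcal N_n)_{n\in\omega}$ of countable networks for $X$, and enumerate each $\mathcal N_n=\{N^n_k:k\in\omega\}$. For every $(x,B)\in X\times\mathcal B$ with $x\in B$ let $g_{x,B}(n)=\min\{k:x\in N^n_k\subseteq B\}$; this minimum exists because $\mathcal N_n$ is a network. The resulting family $\{g_{x,B}\}\subseteq\omega^\omega$ has cardinality at most $|X|\cdot|\mathcal B|=\max(|X|,|\mathcal B|)<\frak d$, so it is not dominating. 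Pick $f\in\omega^\omega$ with $f\not\leq^* g_{x,B}$ for every $(x,B)$, and set $\mathcal F_n=\{N^n_k:k\leq f(n)\}$. Then for every $(x,B)$ there are infinitely many $n$ with $g_{x,B}(n)\leq f(n)$, so that $N^n_{g_{x,B}(n)}\in\mathcal F_n$ witnesses the pair $(x,B)$. Since $\mathcal B$ is a base, $\bigcup_n\mathcal F_n$ is a network.

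\emph{Direction $(2)\Rightarrow(1)$.} I argue by contrapositive. Assume $|X|\geq\frak d$, fix pairwise distinct points $\{x_\alpha:\alpha<\frak d\}\subseteq X$, a dominating family $\{d_\alpha:\alpha<\frak d\}\subseteq\omega^\omega$ (strictly increasing, and replaced by $d_\alpha(n)+n+1$ so that pointwise domination holds), and open neighborhoods $U_\alpha\ni x_\alpha$ chosen from a fixed base of size $w(X)<\frak d$. The plan is to produce a sequence $(\mathcal N_n)_n$ of countable networks with the enumeration $\mathcal N_n=\{N^n_k:k\in\omega\}$ arranged so that, for every $\alpha$ and every $n$, the least $k$ satisfying $x_\alpha\in N^n_k\subseteq U_\alpha$ is at least $d_\alpha(n)$. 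Granted such networks, any choice of finite $\mathcal F_n\subseteq\mathcal N_n$ determines $f(n)=\max\{k:N^n_k\in\mathcal F_n\}\in\omega^\omega$, and by the pointwise version of dominatingness there is some $\alpha$ with $d_\alpha(n)>f(n)$ for every $n$. Hence no element of $\bigcup_n\mathcal F_n$ witnesses the pair $(x_\alpha,U_\alpha)$, so $\bigcup_n\mathcal F_n$ fails to be a network and $X$ is not M-nw-selective, contradicting (2).

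\emph{Main obstacle.} The delicate point is constructing the networks $\mathcal N_n$ in the backward direction: in a single countable family one must defer all $\frak d$-many targeted witnesses $(x_\alpha,U_\alpha)$ past their own threshold $d_\alpha(n)$ without destroying the network property for the rest of $X$. The intended construction starts from a fixed countable network $\mathcal M=\{M_k:k\in\omega\}$ for $X$ and, at position $k$ of stage $n$, replaces $M_k$ by
$$M^n_k\;=\;M_k\setminus\{x_\alpha:d_\alpha(n)>k\text{ and }M_k\subseteq U_\alpha\},$$
which is still countable in index and ensures the minimality-of-index condition above. The hypothesis $w(X)<\frak d$ is used to keep all the $U_\alpha$ inside a small base, and the verification that $\{M^n_k:k\in\omega\}$ remains a network for every point of $X$ (not merely for the targeted $x_\alpha$'s) is the step that requires the most care and that I expect to occupy the bulk of the argument.
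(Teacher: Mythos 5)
The paper itself offers no proof of this proposition --- it is quoted from the preprint \cite{BGZ} --- so there is nothing to compare your argument against; it can only be judged on its own terms. Your direction $(1)\Rightarrow(2)$ is correct and is the standard dominating-family argument: indexing the functions $g_{x,B}$ by pairs $(x,B)$ with $B$ ranging over a base of size $w(X)<{\frak d}$, noting the family is too small to be dominating, and cutting each $\mathcal N_n$ at the level of an escaping function. No complaints there.

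The gap is in $(2)\Rightarrow(1)$, exactly at the step you yourself flag as ``requiring the most care'' and then do not carry out. The construction $M^n_k=M_k\setminus\{x_\alpha: d_\alpha(n)>k\mbox{ and }M_k\subseteq U_\alpha\}$ fails for two separate reasons. First, $\{M^n_k:k\in\omega\}$ need not be a network: if every $M\in\mathcal M$ with $x_\alpha\in M\subseteq V$ (for some neighbourhood $V\subseteq U_\alpha$ of $x_\alpha$) occurs only at indices $k<d_\alpha(n)$ and is contained in $U_\alpha$, then $x_\alpha$ is deleted from all of them and the pair $(x_\alpha,V)$ loses all its witnesses. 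This part is repairable by enumerating $\mathcal M$ with each element repeated infinitely often. Second, and fatally for the diagonalization, the ``minimality of index'' property can fail because of accidental witnesses: it may happen that $M_k\not\subseteq U_\alpha$ (so $x_\alpha$ is \emph{not} deleted from $M_k$) while every point of $M_k\setminus U_\alpha$ is some $x_\beta$ that \emph{is} deleted because $d_\beta(n)>k$ and $M_k\subseteq U_\beta$; then $x_\alpha\in M^n_k\subseteq U_\alpha$ at an index $k$ far below $d_\alpha(n)$. A two-element set $M_k=\{x_\alpha,x_\beta\}$ with $x_\beta\notin U_\alpha$ and $M_k\subseteq U_\beta$ already realizes this. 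Since the negation of M-nw-selectivity requires that \emph{every} choice of finite $\mathcal F_n$ fail to yield a network, a single such accidental witness per inning defeats your argument. The construction therefore has to be redone (for instance, the deletions must be governed by something that keeps $M^n_k\subseteq U_\alpha$ equivalent to $M_k\subseteq U_\alpha$, or the $U_\alpha$ must be chosen with much more care), and it is plausible that the hypothesis $w(X)<{\frak d}$ --- which in your sketch is used only to keep the $U_\alpha$ in a small base --- has to enter the backward direction in a more essential way. A minor additional point: a family of the form $d_\alpha(n)+n+1$ obtained from a $\leq^*$-dominating family is not automatically $\leq$-dominating; you should instead invoke the fact, recorded in the introduction of the paper, that ${\frak d}$ is unchanged when $\leq^*$ is replaced by $\leq$.
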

Then it is possible to give the following equivalences.
\begin{proposition}\rm
	$(MA[{\frak d}])$ Let $X$ be a space with $nw(X)= \omega$ and $w(X) < {\frak d}$. The following are equivalent:
	\begin{enumerate}
		\item $|X|< {\frak d}$;
		\item {\sc Alice} $\not\uparrow$ M-nw-selective$(X)$;
		\item $X$ is M-nw-selective.
	\end{enumerate}
\end{proposition}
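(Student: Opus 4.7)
The plan is to verify the three equivalences by chaining the two results recalled immediately before the statement together with the general implication from Remark \ref{remarkina}(\ref{4}).

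First I would prove (1) $\Longrightarrow$ (2). This is a direct application of Proposition \ref{Alice}: under $MA[{\frak d}]$, the hypotheses $nw(X) = \omega$, $|X| < {\frak d}$ and $w(X) < {\frak d}$ are exactly what is needed to conclude that {\sc Alice} has no winning strategy in the M-nw-selective game on $X$.

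Next, (2) $\Longrightarrow$ (3) is the general implication recorded in Remark \ref{remarkina}(\ref{4}): if {\sc Alice} does not have a winning strategy in $G_{fin}(\mathcal{A},\mathcal{B})$, then the selection principle $S_{fin}(\mathcal{A},\mathcal{B})$ holds. Taking $\mathcal{A} = \mathcal{B}$ to be the family of countable networks of $X$ gives precisely that $X$ is M-nw-selective. One only needs to observe that, by the definition of the game, {\sc Alice}'s moves range over all countable networks and {\sc Bob}'s finite selections are exactly the data required by the M-nw-selective property.

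Finally, (3) $\Longrightarrow$ (1) is the implication (2) $\Longrightarrow$ (1) in the Proposition from \cite{BGZ} quoted just above, which needs only $nw(X) = \omega$ and $w(X) < {\frak d}$ (no extra set-theoretic assumption is used here).

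I do not expect any real obstacle: all three arrows reduce to results already stated in the paper, and the Martin-axiom hypothesis $MA[{\frak d}]$ enters only through (1) $\Longrightarrow$ (2). The only thing worth double-checking is that the general implication of Remark \ref{remarkina}(\ref{4}) applies verbatim when $\mathcal{A} = \mathcal{B}$ is the class of countable networks, but this is immediate from the definitions of the M-nw-selective game and of M-nw-selectivity.
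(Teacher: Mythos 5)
Your proof is correct and follows exactly the route the paper intends: the paper gives no explicit proof, but states the proposition immediately after Proposition \ref{Alice} and the recalled result from \cite{BGZ}, so the intended argument is precisely your chain (1)$\Rightarrow$(2) via Proposition \ref{Alice}, (2)$\Rightarrow$(3) via Remark \ref{remarkina}(\ref{4}), and (3)$\Rightarrow$(1) via the \cite{BGZ} proposition. Nothing is missing.
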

However, it is worthwhile to pose the following question.
\begin{question}\rm
	Does the M-nw-selective property imply that {\sc Alice} $\not\uparrow$ M-nw-selective$(X)$?
\end{question}

\begin{proposition}\rm\label{sigma}
	Let $X$ be a regular space such that {\sc Bob }$\uparrow$ M-nw-selective(X). Then $X$ is $\sigma$-compact.
\end{proposition}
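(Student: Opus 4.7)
The plan is to represent $X$ as a countable union of compact sets by exploiting Bob's strategy across all possible positions of the game. Let $\sigma$ be Bob's winning strategy. For every legitimate position $p=(\mathcal{N}_0,\dots,\mathcal{N}_{n-1})$ consisting of Alice's previously played countable networks, write $\sigma(p^\frown\mathcal{N})$ for Bob's finite response at inning $n$ and set
\[ K_p=\bigcap_{\mathcal{N}}\overline{\textstyle\bigcup\sigma(p^\frown\mathcal{N})}, \]
where $\mathcal{N}$ ranges over all countable networks of $X$. The argument will then split into (i) showing each $K_p$ is compact, and (ii) selecting countably many positions $p$ so that the corresponding $K_p$'s exhaust $X$.

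For (i), I would use that $X$, being regular with $nw(X)=\omega$, admits a countable network $\mathcal{M}$ of closed sets and is hereditarily Lindel\"of (so each $K_p$ is itself Lindel\"of). Assume toward a contradiction that a countable open cover $\{U_k:k\in\omega\}$ of $K_p$ admits no finite subcover, and define
\[ A=\{N\in\mathcal{M}:N\subseteq U_k\text{ for some }k\},\qquad \mathcal{M}^\dagger=\{N\in\mathcal{M}:N\cap K_p=\emptyset\}. \]
Using regularity to shrink neighborhoods (inside some $U_k$ for points of $K_p$, or away from the closed set $K_p$ for points outside), one checks that $\mathcal{N}^*=A\cup\mathcal{M}^\dagger$ is a countable network for $X$, hence a legitimate Alice move in the subgame starting at $p$. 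Bob's response decomposes as $S_1\cup S_2$ with $S_1\subseteq A$ and $S_2\subseteq\mathcal{M}^\dagger$; since $\mathcal{M}$ is closed, $\overline{\bigcup\sigma(p^\frown\mathcal{N}^*)}=\bigcup S_1\cup\bigcup S_2$, and $\bigcup S_2$ avoids $K_p$ by construction, so $K_p\subseteq\bigcup S_1$. But every $N\in S_1$ is contained in some $U_{k(N)}$, and hence $\{U_{k(N)}:N\in S_1\}$ is a finite subcover of $K_p$, a contradiction.

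For (ii), hereditary Lindel\"ofness yields, for each $p$, countably many networks $\{\mathcal{N}^p_i:i\in\omega\}$ with $K_p=\bigcap_i\overline{\bigcup\sigma(p^\frown\mathcal{N}^p_i)}$, obtained as a countable subcover of $X\setminus K_p$ by the opens $X\setminus\overline{\bigcup\sigma(p^\frown\mathcal{N})}$. Let $T$ be the countable tree whose node $p$ has children $\{p^\frown\mathcal{N}^p_i:i\in\omega\}$. If some $x\in X$ escaped every $K_p$ with $p\in T$, then starting at $p_0=\emptyset$ and inductively choosing $i_n$ with $x\notin\overline{\bigcup\sigma(p_n^\frown\mathcal{N}^{p_n}_{i_n})}$ produces a $\sigma$-consistent play in which $x$ lies in no Bob-picked set. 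This contradicts the fact that Bob wins: his total selection is a network for $X$ and hence, taking $U=X$, must contain some set through $x$. Therefore $X=\bigcup_{p\in T}K_p$ is $\sigma$-compact.

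I expect the main obstacle to be step (i): the decisive trick is the design of $\mathcal{N}^*=A\cup\mathcal{M}^\dagger$, which forces Bob's reply either to land inside the cover $\{U_k\}$ or to remain disjoint from $K_p$, ruling out any ``boundary'' contribution that would let Bob cover $K_p$ via closures of sets escaping the cover. This dichotomy is available precisely because networks (unlike open covers) can be chosen to consist of closed sets with prescribed avoidance of $K_p$, and is what allows $\sigma$-compactness to be extracted here without any metrizability hypothesis.
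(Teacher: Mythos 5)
Your proposal is correct and follows essentially the same route as the paper: the same compact sets $K_p$ indexed by positions, the same trick of handing Bob a tailored network whose elements either sit inside a member of the cover or miss $K_p$ (you build closedness into the network via regularity, the paper uses closures $\overline{N}$ — a cosmetic difference), the same hereditary-Lindel\"of reduction to countably many children per node, and the same branch-following contradiction to show the $K_p$'s exhaust $X$.
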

\begin{proof}
	Let $\Bbb M$ be the collection of all countable networks of $X$ and $\sigma$ a winning strategy for {\sc Bob} in M-nw-selective(X).\\
	{Claim 1:} $\bigcap_{{\cal N}\in {\Bbb M}}\overline{\bigcup \sigma({\cal N})}$ is compact.\\
	Indeed, put $K=\bigcap_{{\cal N}\in {\Bbb M}}\overline{\bigcup \sigma({\cal N})}$, let $\cal U$ be a cover made by open sets of $X$ and ${\cal N}\in {\Bbb M}$. Consider the network ${\cal N}'= \{N\in {\cal N}: \overline{N}\subset U \hbox{ for some } U\in {\cal U}\}\cup \{N\in {\cal N}: \overline{N}\cap K=\emptyset \}$. Then $K\subset \sigma(\langle {\cal N}'\rangle)$ and considering the corresponding open sets we have the compactness of $K$.\\
	{Claim 2:} There exists a countable subset ${\Bbb M}'\subset {\Bbb M}$ such that $\bigcap_{{\cal N}\in {\Bbb M}'}\overline{\bigcup \sigma({\cal N})}=\bigcap_{{\cal N}\in {\Bbb M}}\overline{\bigcup \sigma({\cal N})}$.\\
	The proof is similar to the one of {Claim 2.} in Proposition \ref{prop2} and, as in there, these claims are true also for all the other innings.\\
	There exists $({\mathcal N}_{\emptyset}^n)_{n\in\omega}$, that is countably many possible first inning ${\mathcal N}_{\emptyset}^0, {\mathcal N}_{\emptyset}^1,  {\mathcal N}_{\emptyset}^2, ...$, such that 
	$\bigcap_{{\mathcal N}\in{\Bbb M}}\overline{\bigcup\sigma({\mathcal N})}
	=\bigcap_{n\in\omega}\overline{\bigcup\sigma({\mathcal N}_{\emptyset}^n)}$.
	\\
	If Alice chooses ${\mathcal N}^0_{\emptyset}$, we can find $({\mathcal N}_{<0>}^n)_{n\in\omega}$ such that 
	$\bigcap_{{\mathcal N}\in{\Bbb M}}\overline{\bigcup\sigma({\mathcal N}^0_{\emptyset}, {\mathcal N})}
	=\bigcap_{n\in\omega}\overline{\bigcup\sigma({\mathcal N}^0_{\emptyset}, {\mathcal N}_{<0>}^n)}$.
	\\
	If then Alice chooses ${\mathcal N}^1_{<0>}$, we can find
	$({\mathcal N}_{<0,1>}^n)_{n\in\omega}$ 
	such that
	$\bigcap_{n\in\omega}\overline{\bigcup\sigma({\mathcal N}^0_{\emptyset}, {\mathcal N}_{<0>}^{1}, {\mathcal N}_{<0,1>}^n)}=\bigcap_{{\mathcal N}\in{\Bbb M}}\overline{\bigcup\sigma({\mathcal N}^0_{\emptyset}, {\mathcal N}_{<0>}^{1}, {\mathcal N})}$.
	By claims 1 and 2, each intersection, if it is not empty, is a compact subset. If the intersection is empty, we do not do anything and if $\bigcap_{n\in\omega}\overline{\bigcup\sigma({\mathcal N}^n_{\emptyset})}$ is a compact, we call this subset $K^\emptyset$. 
	If $\bigcap_{n\in\omega}\overline{\bigcup\sigma({\mathcal N}^0_{\emptyset}, {\mathcal N}_{<0>}^n)}$ is a compact subset, we call it $K^{<0>}$. 
	If $\bigcap_{n\in\omega}\overline{\bigcup\sigma({\mathcal N}^0_{\emptyset}, {\mathcal N}_{<0>}^{1}, {\mathcal N}_{<0,1>}^n)}$ is a compact subset, we call this element $K^{<0,1>}$, and so on.
	Consider the set $X_0=\bigcup\{K^s: s\in \omega^{<\omega}\}$. Now we prove that $X_0=X$. By contradiction, assume there exists $y\in X\setminus X_0$. Then $y\notin \bigcap_{n\in\omega}\overline{\bigcup\sigma({\mathcal N}^n_{\emptyset})}$; hence there exists $n_0\in\omega$ such that $y\not\in \overline{\bigcup\sigma({\mathcal N}^{n_{0}}_{\emptyset})}$. Again, $y\notin \bigcap_{n\in\omega}\overline{\bigcup\sigma({\mathcal N}^{n_0}_{\emptyset}, {\mathcal N}^{n}_{\langle n_0 \rangle} )}$; hence there exists $n_1\in\omega$ such that $y\not\in \overline{\bigcup\sigma({\mathcal N}^{n_0}_{\emptyset}, {\mathcal N}^{n_1}_{\langle n_0 \rangle} )}$.
	Proceeding in this way we obtain a branch (or an evolution of the M-nw-selective($X$)) in which {\sc Bob} does not win, a contradiction, because $\sigma$ is a winning strategy. Then $X$ is $\sigma$-compact.
\end{proof}
\begin{corollary}\rm\label{secoundcount}
	Let $X$ be a regular space in which {\sc Bob} $ \uparrow$ M-nw-selective$(X)$. Then $X$ is secound countable.
\end{corollary}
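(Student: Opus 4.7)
My plan is to adapt the second half of the proof of Proposition~\ref{prop2} to the $\sigma$-compact setting of Proposition~\ref{sigma}, reusing the play tree constructed there.

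Reusing the decomposition $X=\bigcup_{s\in\omega^{<\omega}}K^s$ from Proposition~\ref{sigma}, the first observation is that each non-empty $K^s$ is second countable: as a compact Hausdorff subspace of $X$ it has $nw(K^s)\le\omega$, and since $nw=w$ on compact Hausdorff spaces (see \cite{Ba}), $w(K^s)=\omega$. I fix a countable dense $D^s\subseteq K^s$ together with a countable base $\{B_k^s\}_{k\in\omega}$ of $K^s$, writing $B_k^s=W_k^s\cap K^s$ with each $W_k^s$ open in $X$.

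The next step is to prove analogs of Claims 3 and 4 of Proposition~\ref{prop2}: for every node $s$ with $K^s\neq\emptyset$ and every $x\in K^s$ there exists an open neighbourhood $V_{s,x}$ of $x$ contained in $\bigcup_{\mathcal{N}\in\Bbb M}\overline{\bigcup\sigma(s\cdot\mathcal{N})}$, and the right-hand union can be realized by a countable $\Bbb M'\subseteq\Bbb M$. The Claim 3 analog follows the contradiction scheme of Proposition~\ref{prop2}: negating the conclusion produces, for each open $V\ni x$, a witness $y_V\in V\setminus\bigcup_{\mathcal{N}}\overline{\bigcup\sigma(s\cdot\mathcal{N})}$, from which one assembles a countable modified network $\mathcal{N}'=(\mathcal{N}\setminus\mathcal{N}_x)\cup\{\{x,y_V\}:V\in\tau_x^\ast\}$, where $\tau_x^\ast$ is a countable collection of open neighbourhoods of $x$ obtained by pulling back a countable local base at $x$ in $K^s$ and shrinking via regularity of $X$; Bob's finite-subset response must then contain some set $\{x,y_V\}$, forcing $y_V\in\bigcup_{\mathcal{N}}\overline{\bigcup\sigma(s\cdot\mathcal{N})}$, contradicting the choice of $y_V$.

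Finally, I assemble the countable family $\mathcal{V}=\{W_k^s:s\in\omega^{<\omega},\,k\in\omega\}\cup\{V_{s,x}:s\in\omega^{<\omega},\,x\in D^s\}$ of open sets in $X$ and verify that $\mathcal{V}$ is a base by the diagonal tree argument closing the proof of Proposition~\ref{prop2}: assuming some $x_0\in X$ and open $U_0\ni x_0$ admit no $V\in\mathcal{V}$ with $x_0\in V\subseteq U_0$, one constructs a branch of the play tree along which Bob's finite subsets never accumulate into a network witnessing $(x_0,U_0)$, contradicting $\sigma$ being a winning strategy.

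The main obstacle is the Claim 3 adaptation. In Proposition~\ref{prop2} the countability of $X$ made $\mathcal{N}'$ automatically countable; here $X$ is only $\sigma$-compact, so one must produce a countable $\tau_x^\ast$ whose witnesses $y_V$ keep $\mathcal{N}'$ both countable and a network for $X$ at $x$. This is exactly where the second countability of the compact piece $K^s$ (from $nw=w$) together with the regularity of $X$ become essential: a countable local base at $x$ in $K^s$ can be refined via regularity in $X$ to a countable family of open neighbourhoods of $x$ in $X$ that still catches every open set around $x$ when combined with $\{\{x,y_V\}:V\in\tau_x^\ast\}$.
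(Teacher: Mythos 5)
The paper gives no argument for this corollary beyond citing Proposition~\ref{sigma}, so your instinct that a genuine game-theoretic argument is needed is sound: $\sigma$-compactness together with regularity and countable network weight does \emph{not} by itself yield second countability (the Arens--Fort space is countable, hence $\sigma$-compact, regular, has a countable network of singletons, and is not even first countable at its distinguished point). The paper only carries out the relevant Claim~3/Claim~4 machinery in the countable case (Proposition~\ref{secound}), where countability of $X$ does all the work --- and that is precisely where your adaptation breaks.

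The gap is in your Claim~3 analogue. For $\mathcal{N}'=(\mathcal{N}\setminus\mathcal{N}_x)\cup\{\{x,y_V\}:V\in\tau_x^\ast\}$ to be a legal move it must be countable \emph{and} a network at $x$, i.e.\ you need countably many witnesses $y_V$ such that every open $U\ni x$ contains one of them. Your proposed source of countability --- a countable local base at $x$ in $K^s$, ``refined via regularity of $X$'' --- cannot deliver this: a local base of $x$ in the compact subspace $K^s$ consists of traces $W\cap K^s$, and the open sets $W$ of $X$ realizing those traces are in no way small in $X$ (if $K^s=\{x\}$ they are arbitrary neighbourhoods of $x$). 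A countable family of neighbourhoods of $x$ in $X$ that ``catches every open set around $x$'' is exactly a countable local base at $x$, which is not available at this stage --- it is essentially what you are trying to prove. The step can be repaired by a different device: since $nw(X)=\omega$, $X$ is hereditarily separable, so $Z=X\setminus\bigcup_{\mathcal{N}}\overline{\bigcup\sigma(s\cdot\mathcal{N})}$ has a countable dense subset $Y_0$; if the claim fails at $x\in K^s$ then $x\in\overline{Z}\subseteq\overline{Y_0}$, so $\mathcal{N}'=(\mathcal{N}\setminus\mathcal{N}_x)\cup\{\{x,y\}:y\in Y_0\}$ is a countable network, and Bob's finite answer, whose union has closure containing $x$, must contain some $\{x,y\}$, contradicting $y\in Z$. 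A smaller but related defect occurs in your final assembly: the sets $W_k^s$ are only controlled on $K^s$, so they need not be small in $X$ and contribute nothing towards a base; the base must come from the $V_{s,x}$'s alone via the branch argument, which therefore needs to be written out rather than quoted from Proposition~\ref{prop2}.
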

Recall that every regular secound countable space is metrizable and a space is called $\sigma$-(metrizable compact) if it is union of countably many metrizable compact spaces. Then it is possible to obtain the following corollary.

\begin{corollary}\rm \label{metri}
	Let $X$ be a regular space in which {\sc Bob} $ \uparrow$ M-nw-selective$(X)$. Then $X$ is metrizable.
\end{corollary}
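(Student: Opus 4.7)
The plan is very short since nearly all of the heavy lifting is already in place. By Corollary \ref{secoundcount}, the assumption that {\sc Bob} $\uparrow$ M-nw-selective$(X)$ together with regularity forces $X$ to be second countable. Combining this with the standing regularity hypothesis, Urysohn's classical metrization theorem immediately yields that $X$ is metrizable, which is exactly what we want.

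An equivalent route, which also exposes the $\sigma$-(metrizable compact) structure hinted at in the paragraph preceding the corollary, is to first apply Proposition \ref{sigma} to write $X=\bigcup_{n\in\omega}K_n$ with each $K_n$ compact. Each $K_n$ inherits $nw(K_n)\leq nw(X)=\omega$, and since $nw=w$ on compact Hausdorff spaces, each $K_n$ has a countable base; being regular, each $K_n$ is then metrizable by Urysohn. This already realizes $X$ as a countable union of metrizable compacta, i.e., as a $\sigma$-(metrizable compact) space. To upgrade this decomposition to metrizability of the whole space $X$ one still invokes second countability of $X$ via Corollary \ref{secoundcount} and then Urysohn, so the two routes merge at the final step.

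In either presentation, I do not expect a genuine obstacle: the substantive ingredients — $\sigma$-compactness from Proposition \ref{sigma} and second countability from Corollary \ref{secoundcount} — have already been extracted from the winning strategy of {\sc Bob}, and the conclusion reduces to a single appeal to Urysohn. The only subtlety worth flagging is that metrizability of $X$ really does require second countability of $X$ itself and is not formally implied by the $\sigma$-(metrizable compact) decomposition alone, which is why Corollary \ref{secoundcount} is the essential input rather than Proposition \ref{sigma}.
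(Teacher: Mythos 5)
Your first route is exactly the paper's intended argument: Corollary \ref{secoundcount} gives second countability, and the remark preceding the corollary (regular plus second countable implies metrizable) finishes it. The proposal is correct and matches the paper's approach; your closing caveat that Proposition \ref{sigma} alone would not suffice is also accurate.
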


\begin{corollary}\rm 
	Let $X$ be a regular space in which {\sc Bob} $ \uparrow$ M-nw-selective$(X)$. Then $X$ is $\sigma$-(metrizable compact).
\end{corollary}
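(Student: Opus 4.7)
The plan is to simply combine the two results that have already been established in the section, since together they give everything we need. By Proposition~\ref{sigma}, $X$ is $\sigma$-compact, so we can write $X = \bigcup_{n \in \omega} K_n$, where each $K_n \subseteq X$ is compact. By Corollary~\ref{metri}, $X$ is metrizable.

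The key observation is that metrizability is a hereditary property: any subspace of a metrizable space is metrizable (with the restriction of the metric). Applying this to each $K_n$ as a subspace of $X$, we conclude that every $K_n$ is both compact and metrizable, i.e., a metrizable compact space. Therefore $X = \bigcup_{n \in \omega} K_n$ exhibits $X$ as a countable union of metrizable compact subspaces, which is precisely the definition of $\sigma$-(metrizable compact) given just before the statement.

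There is no genuine obstacle here; the content of the corollary is entirely packaged in Proposition~\ref{sigma} and Corollary~\ref{metri}, and this last step is just the trivial remark that hereditary metrizability promotes \emph{compact} pieces of a $\sigma$-compact metrizable space to \emph{metrizable compact} pieces. The proof thus reduces to two sentences invoking the earlier results.
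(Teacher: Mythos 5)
Your proof is correct and matches the paper's intended argument exactly: the corollary is stated there without proof as an immediate consequence of Proposition~\ref{sigma} ($\sigma$-compactness) and Corollary~\ref{metri} (metrizability), with the hereditary nature of metrizability doing the only remaining work. Nothing further is needed.
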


The following is a consistent example showing that the M-nw-selective game can be indeterminate. It also shows that the vice versa of the Corollaries \ref{secoundcount} and \ref{metri} does not hold.

\begin{example}\rm $(MA[{\frak d}]+ \omega_1<{\frak d})$
	Consider a subset $X$ of the irrational numbers having cardinality $\omega_1$. By Proposition \ref{Alice}, {\sc Alice} $\not\uparrow$ M-nw-selective$(X)$. Since $X$ is not $\sigma$-compact, by Proposition \ref{sigma} we have that {\sc Bob} $\not \uparrow$ M-nw-selective$(X)$. 
\end{example}
The hypotesis of regularity in Corollary \ref{secoundcount} can be replaced by countability of the space as the following shows.
\begin{proposition}\rm\label{secound}
	If $X$ is a countable space in which {\sc Bob} $\uparrow$ M-nw-selective$(X)$. Then $X$ is secound countable.
\end{proposition}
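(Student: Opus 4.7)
The plan is to adapt the tree-construction argument of Proposition~\ref{prop2} to the countable, only-$T_2$ setting of the M-nw-selective game. Fix a winning strategy $\sigma$ for \textsc{Bob} and let $\mathbb{M}$ denote the collection of all countable networks of $X$. For each finite play history $s$ and each $\mathcal N\in\mathbb M$ write $\sigma(s,\mathcal N)$ for \textsc{Bob}'s finite response to $\mathcal N$ played after $s$, and set
\[
K_s \;=\; \bigcap_{\mathcal N \in \mathbb{M}} \overline{\bigcup \sigma(s,\mathcal N)}.
\]
Since $X$ is countable so is every $K_s$; this countability will substitute for the single-point bound exploited in Proposition~\ref{prop2} and the compactness exploited in Proposition~\ref{sigma}. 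The goal is to produce, from the tree of plays, a countable family of open sets witnessing second countability.

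At every node $s$ I first prove the natural analogues of Claims~2--4 of Proposition~\ref{prop2}. A Claim~2 analogue yields a countable $\mathbb{M}_s^*\subseteq\mathbb{M}$ with $\bigcap_{\mathcal N\in\mathbb{M}_s^*}\overline{\bigcup\sigma(s,\mathcal N)}=K_s$, immediate from hereditary Lindel\"ofness (free since $X$ is countable). A Claim~4 enlargement to $\mathbb{M}_s'$ also realising the union $\bigcup_{\mathcal N}\overline{\bigcup\sigma(s,\mathcal N)}$ is equally easy because this union lies inside the countable set $X$. The Claim~3 analogue -- that for each $x\in K_s$ there is an open $V_s^x\ni x$ with $V_s^x\subseteq\bigcup_{\mathcal N}\overline{\bigcup\sigma(s,\mathcal N)}$ -- is proved as in Proposition~\ref{prop2}: assuming failure, pick $y_V\in V\setminus\bigcup_{\mathcal N}\overline{\bigcup\sigma(s,\mathcal N)}$ for each neighbourhood $V$ of $x$, fix a countable network $\mathcal N_0$, and form
\[
\mathcal N' \;=\; (\mathcal N_0 \setminus (\mathcal N_0)_x) \;\cup\; \{\{x, y_V\} : V \in \tau_x\},
\]
which is a countable network by the same $T_2$ argument as in Proposition~\ref{prop2}. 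Running the same dichotomy on the finite set $\sigma(s,\mathcal N')$ gives a contradiction: either some $\{x,y_V\}$ belongs to $\sigma(s,\mathcal N')$, forcing $y_V\in\overline{\bigcup\sigma(s,\mathcal N')}$ against the choice of $y_V$; or every element of $\sigma(s,\mathcal N')$ lies in $\mathcal N_0\setminus(\mathcal N_0)_x$, giving $x\notin\overline{\bigcup\sigma(s,\mathcal N')}$ against $x\in K_s$. The fact that \textsc{Bob}'s response is now a finite set rather than a single element is absorbed harmlessly by the finite union.

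With the claims in hand I build the countable tree $T=\omega^{<\omega}$: at each node $s$ enumerate $\mathbb{M}_s'$ as $(\mathcal N_s^n)_{n\in\omega}$, and for each $s\in T$ and each $x\in K_s$ select $V_s^x$ as above. Since every $K_s$ is countable, $\mathcal B=\{V_s^x:s\in T,\ x\in K_s\}$ is countable. To see $\mathcal B$ is a base I suppose, for contradiction, that there exist $x\in X$ and an open $A\ni x$ with $V_s^x\not\subseteq A$ whenever $x\in V_s^x\in\mathcal B$. I then design \textsc{Alice}'s moves along $T$: at history $s$, if $x\notin K_s$ pick $n$ with $x\notin\overline{\bigcup\sigma(s,\mathcal N_s^n)}$; if $x\in K_s$ then $V_s^x$ is defined and $V_s^x\not\subseteq A$, so some $z\in V_s^x\setminus A$ lies in $\overline{\bigcup\sigma(s,\mathcal N_s^n)}$ for some $n$, which \textsc{Alice} picks. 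Along the resulting branch \textsc{Bob} plays $\sigma$, and the same case analysis as in the closing paragraph of Proposition~\ref{prop2} shows that the collection of \textsc{Bob}'s selections fails the network condition at $(x,A)$, contradicting that $\sigma$ is winning.

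The main obstacle is this last step: in the R-case \textsc{Bob} picks a single element per inning, so knocking it out of $A$ or away from $x$ directly blocks him, whereas in the M-case \textsc{Bob}'s finite family must be prevented from containing \emph{any} element $N$ with $x\in N\subseteq A$. I expect to need the same delicate bookkeeping as in Proposition~\ref{prop2}, combining the ``$x\notin\bigcup\sigma(s,\mathcal N_s^n)$'' branches with the ``$\overline{\bigcup\sigma(s,\mathcal N_s^n)}\not\subseteq A$'' branches so that, across the entire play, no element of any $\sigma(s,\mathcal N_s^n)$ simultaneously contains $x$ and sits inside $A$.
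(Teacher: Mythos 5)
Your overall route is the one the paper intends (transplant the tree construction of Proposition~\ref{prop2}, replacing the singleton intersection by the countable set $K_s$), and the parts you work out in detail are sound: the countability of $K_s$ as the substitute for Claim~1, the Claim~2/4 analogues via countability of $X$, and in particular your Claim~3 analogue, where the dichotomy on the finite family $\sigma(s,{\mathcal N}')$ (either some $\{x,y_V\}$ is selected, or every selected element misses $x$ in its closure, so the \emph{finite} union of closures misses $x$) is exactly the right way to absorb the passage from a single element to a finite family. Your handling of the non-singleton intersection is in fact more careful than the paper's literal Claims~$3'$ and~$4'$, which are stated only for $\bigcap_{{\mathcal N}}\overline{\bigcup\sigma({\mathcal N})}=\{x\}$.

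However, the final step is a genuine gap, and you say so yourself. Knowing that some $z\in V_s^x\setminus A$ lies in $\overline{\bigcup\sigma(s,{\mathcal N}_s^n)}$ only tells you that \emph{some} element of the finite family $\sigma(s,{\mathcal N}_s^n)$ has closure meeting $X\setminus A$; it does not prevent \emph{another} element $N$ of the same finite family (nor even that same element, since $z\in\overline N$ does not give $z\in N$) from satisfying $x\in N\subseteq A$, in which case {\sc Bob} has already secured the pair $(x,A)$ and the contradiction never materialises. The bookkeeping you defer is therefore not routine: the sets $\overline{\bigcup\sigma(s,{\mathcal N})}$ are too coarse to block {\sc Bob} inning by inning. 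The natural repair is to track the \emph{witnessing} elements directly: for a node $s$ and a pair $(x,A)$ at which {\sc Alice} cannot block (i.e.\ every ${\mathcal N}\in{\Bbb M}$ has some $N\in\sigma(s,{\mathcal N})$ with $x\in N\subseteq A$), your own $\{x,y_V\}$-network trick, applied with $y_V$ chosen outside $D_s^{x,A}=\bigcup\{N\in\sigma(s,{\mathcal N}):{\mathcal N}\in{\Bbb M},\ x\in N\subseteq A\}$, shows that $x$ lies in the interior of $D_s^{x,A}\subseteq A$. This gives the needed open set between $x$ and $A$, but it depends on $A$, so you must still argue that countably many such interiors (indexed over the countable tree) suffice — for instance by shrinking to the countable families ${\Bbb M}_s'$ and checking that failure to block within ${\Bbb M}_s'$ already forces $x\in\mathrm{int}\bigl(\bigcup\{N\in\sigma(s,{\mathcal N}):{\mathcal N}\in{\Bbb M}_s',\ x\in N\subseteq A\}\bigr)$. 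Until this is carried out, the proof is not complete.
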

\begin{proof}
	Similar to the the proof of Proposition \ref{prop2} replacing {Claims 3} and {4} with following.\\
	{Claim $3^\prime$.} If $\bigcap_{{\mathcal N}\in{\Bbb M}}\overline{\bigcup\sigma({\mathcal N})}=\{x\}$, there exists an open set $V$ such that $x\in V\subset \bigcup_{{\mathcal N}\in{\Bbb M}}\overline{\bigcup\sigma({\mathcal N})}$.\\
	{Claim $4^\prime$.} If $\bigcap_{{\mathcal N}\in{\Bbb M}}\overline{\bigcup\sigma({\mathcal N})}=\{x\}$, 
	there exists ${\Bbb M}^\prime\subset{\Bbb M}$ countable such that 
	$\bigcap_{{\mathcal N}\in{\Bbb M}^\prime}\overline{\bigcup\sigma({\mathcal N})}=\{x\}$ and also such that $\bigcup_{{\mathcal N}\in{\Bbb M}^\prime}\overline{\bigcup\sigma({\mathcal N})}=\bigcup_{{\mathcal N}\in{\Bbb M}}\overline{\bigcup\sigma({\mathcal N})}$.
\end{proof}


  \end{document}